\providecommand{\gtrless}{
	\mathrel{
		\smash{
			\vcenter{
				\offinterlineskip 
				\ialign{
					\hfil##\hfil\cr 
					$>$\cr 
					\noalign{\kern-.3ex}
					$<$\cr 
				}
			}
		}
		\vphantom{>}
	}
}
\newtheorem{Thm}{Theorem}
\newtheorem{Prop}[Thm]{Proposition}
\newtheorem{Lem}{Lemma}
\newtheorem*{Prop*}{Proposition}
\newtheorem*{Cor*}{Corollary}
\newtheorem*{Thm*}{Theorem}
\DeclarePairedDelimiter\floor{\lfloor}{\rfloor}
\apptocmd{\sloppy}{\hbadness 10000\relax}{}{}
\begin{document}
	\title{Boundary Control and Calder\'on type Inverse Problems in Non-local heat equation}
	\author[Saumyajit Das]{Saumyajit Das}
	\address{Department of Mathematics, Harish-Chandra Research Institute, A CI of Homi Bhabha National Institute, Chhatnag Road, Jhunsi, Allahabad 211 019, India.}
	\email{saumyajit.math.das@gmail.com}
	\begin{abstract}
		We examine various density results related to the solutions of the non-local  heat equation at a specific time slice, focusing on two distinct models: one with homogeneous Dirichlet boundary condition and the other with singular boundary data. In both the cases, we assume the non-local exponent $a\in(\frac{1}{2},1)$. We explore both the qualitative and quantitative aspects of the approximations. However, to obtain the approximation result for singular boundary data, we assume the potential to be small, non-negative, and exhibit only mild growth. The smallness of the potential is explicit in our study and depends on the domain and the dimension only. Additionally, we address Calderón-type inverse problems for these parabolic models, where we recover the potentials by analyzing the solutions either on the boundary or at a particular time slice. In both the density results and the Calderón type inverse problems, the Pohozaev identity plays a crucial role. Finally, in the last section, we apply the Pohozaev identity to a specific elliptic eigenvalue problem and demonstrate that the eigenfunctions, when divided by an appropriate power of the distance function, can not vanish on any non-empty open subset of the boundary. This particular eigenvalue problem does not need any restriction on the non-local exponent.
	\end{abstract}
	\maketitle
	\bibliographystyle{alpha}

	\section{Introduction}
	In this article, we address various qualitative and quantitative approximation properties of solutions to certain  non-local heat equation. Specifically, we consider two equations where one has homogeneous Dirichlet boundary condition and the other has singular boundary data. For the singular boundary data, we analyze the approximation properties through the Pohozaev identity [see \cite{ros2014pohozaev}]. Using the same Pohozaev identity and density results, we also recover the potentials for both the systems by analyzing the solutions either at a specific time slice or on the boundary.  We consider the following two equations:
	\begin{equation}\label{varying initial data}
		\left\{
		\begin{aligned}
			\partial_t u_{f}+(-\Delta)^a u_f+ qu_f=&0 \qquad \qquad  \mbox{in}\ (0,T)\times\Omega\\
			u_f=&0 \qquad \qquad  \mbox{in} \ (0,T)\times\mathbb{R}^N\setminus\Omega\\
			u_f(0,\cdot)=&f \qquad \qquad  \mbox{in} \ \Omega,
		\end{aligned}
		\right .
	\end{equation}
	and
	\begin{equation}\label{varying boundary data}
		\left\{
		\begin{aligned}
			\partial_t u_{F}+(-\Delta)^a u_F+ qu_F=&0 \qquad \qquad  \mbox{in}\ (0,T)\times\Omega\\
			u_F=&0 \qquad \qquad  \mbox{in} \ (0,T)\times\mathbb{R}^N\setminus\overline{\Omega}\\
			\lim\limits_{\substack{x\to y\\ y\in\partial\Omega}}\frac{u_F}{d^{a-1}}(y)=& F \qquad \qquad \mbox{on} \ (0,T)\times\partial\Omega\\
			u_F(0,\cdot)=&0 \qquad \qquad  \mbox{in} \ \Omega.
		\end{aligned}
		\right .
	\end{equation}
	Here $\Omega\subset\mathbb{R}^N$ is a bounded $C^{1,1}$ domain with boundary $\partial\Omega$ and $a\in(\frac{1}{2},1)$. The functions $u_f, u_F\in\mathrm{L}^2\left((0,T)\times\Omega\right)$ are the unknowns and the potential $q\in C_c^{\infty}\left(\Omega\right)$. Furthermore, for \eqref{varying initial data}, we assume the initial condition $f\in C_c^{\infty}(\Omega)$  and for \eqref{varying boundary data} we assume the singular boundary data $F\in C_c^{\infty}((0,T)\times\partial\Omega)$. The distance function $\displaystyle{d:\Omega\to \mathbb{R}_{\geq 0}}$ is the following:
	\[
	d(x):=\inf\limits_{y\in\partial\Omega}\Vert x-y\Vert_2,
	\]
	where $\Vert \cdot\Vert_2$ is the classical Euclidean norm in $\mathbb{R}^N$. To ensure uniqueness, we choose $q$ such that zero is not an eigenvalue of either equation \eqref{varying initial data} or \eqref{varying boundary data}, corresponding to the cases where $f\equiv 0$ or $F\equiv 0$, respectively. Regarding approximation, in \cite{Ruland2017QuantitativeAP}, the authors investigate a qualitative and quantitative approximation property of solutions to a specific non-local heat equation, where they vary the $N+1$ Dirichlet data in the exterior domain. They showed that the solution space is dense in $\mathrm{L}^2$ in the interior by leveraging the unique continuation property: if a function $u$ satisfies $u = (-\Delta)^a u = 0$ in some open set $W \subset \mathbb{R}^N$, then it must vanish identically, i.e., $u \equiv 0$ \cite{ghosh2020calderon}. The precise result is as follows:
	\begin{Thm}[\cite{Ruland2017QuantitativeAP}]
		Let $a\in(0,1)$ and $B_1$ is the unit ball in $\mathbb{R}^N$ centered at origin. Let $W\subset \mathbb{R}^N$ be a non-empty open set such that $\overline{B}_1\cap \overline{W}$ is empty. Let for $f\in C_c^{\infty}((-1,1)\times W)$, the function $u_f:(-1,1)\times\mathbb{R}^N\to \mathbb{R}$ be the solution of the following equation:
		\begin{equation}\label{exterior}
			\left\{
			\begin{aligned}
				\partial_t u_f+(-\Delta)^a u_f=&0 \qquad \qquad  \mbox{in}\ (-1,1)\times B_1\\
				u_f=&f \qquad \quad \ \    \  \mbox{in} \ (-1,1)\times\mathbb{R}^N\setminus B_1\\
				u_f(-1,\cdot)=&f \qquad \qquad  \mbox{in} \ B_1.
			\end{aligned}
			\right .
		\end{equation}
		Then the solution set 
		\[
		\{u_f: \ u_f\  \mbox{satisfies}\ \eqref{exterior}, \, f\in C_c^{\infty}((-1,1)\times W)\}
		\]
		is dense in $\mathrm{L}^2((-1,1)\times B_1)$.
	\end{Thm}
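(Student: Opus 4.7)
The plan is a duality-plus-unique-continuation argument. By the Hahn-Banach theorem, density of the solution set in $\mathrm{L}^2((-1,1)\times B_1)$ is equivalent to showing that any $v \in \mathrm{L}^2((-1,1)\times B_1)$ satisfying $\int_{-1}^{1}\int_{B_1} u_f\, v \, dx\, dt = 0$ for every admissible $f$ must vanish. To exploit this orthogonality, I would introduce the adjoint (backward) problem: let $\phi$ solve $-\partial_t \phi + (-\Delta)^a \phi = v$ in $(-1,1)\times B_1$, with $\phi = 0$ on $(-1,1)\times(\mathbb{R}^N \setminus B_1)$ and terminal condition $\phi(1,\cdot) = 0$ in $B_1$. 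Existence and uniqueness of such a $\phi$ follow from standard parabolic energy estimates adapted to the fractional setting.

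Next, I would pair the equation for $u_f$ with $\phi$, integrate by parts in time, and use the symmetry of $(-\Delta)^a$ via its quadratic form. Since $f$ is supported in $W$ with $W \cap \overline{B_1} = \emptyset$, the initial datum of $u_f$ vanishes inside $B_1$; combined with the vanishing of $\phi$ at time $1$, the boundary-in-time contributions drop out. What remains of the integration by parts is an exterior pairing against $f$, so the orthogonality assumption collapses to
\[
\int_{-1}^{1}\int_{W} f(t,x)\,(-\Delta)^a\phi(t,x)\,dx\,dt = 0 \qquad \forall\, f \in C_c^{\infty}((-1,1)\times W).
\]
Hence $(-\Delta)^a\phi \equiv 0$ on $(-1,1)\times W$, and by construction $\phi \equiv 0$ on $(-1,1)\times W$ as well. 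For almost every $t \in (-1,1)$, the slice $\phi(t,\cdot)$ therefore satisfies $\phi(t,\cdot) = (-\Delta)^a\phi(t,\cdot) = 0$ on the non-empty open set $W$. The strong unique continuation property for the fractional Laplacian, mentioned in the introduction and proved in \cite{ghosh2020calderon}, then forces $\phi(t,\cdot) \equiv 0$ in $\mathbb{R}^N$ for a.e.\ $t$; plugging back into the adjoint equation gives $v \equiv 0$, which is the desired conclusion.

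The principal obstacle will be making the integration by parts fully rigorous when $v$ is only $\mathrm{L}^2$: one must mollify $v$ in space and time, invoke parabolic regularity for the fractional heat equation to extract enough smoothness of $\phi$, and pass to the limit using the symmetry of the quadratic form. A subtler technical point is to guarantee that the slice-wise unique continuation applies for a.e.\ time, which is a Fubini-style step requiring that the space-time distribution $\phi$ admits well-defined spatial restrictions on a set of times of full measure. Once these technicalities are in place, the remaining steps follow cleanly from the symmetry of $(-\Delta)^a$ and the cited unique continuation result.
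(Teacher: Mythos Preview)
Your proposal is correct and follows exactly the mechanism the paper itself attributes to \cite{Ruland2017QuantitativeAP}: a Hahn--Banach duality argument reducing to the adjoint problem, followed by the unique continuation property $u=(-\Delta)^a u=0$ on an open set $\Rightarrow u\equiv 0$. Note, however, that the paper does not give its own proof of this theorem; it is quoted as a background result from \cite{Ruland2017QuantitativeAP}, with only the key idea (unique continuation) indicated in the surrounding text.
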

	The quantitative analysis enables the authors to derive an estimate for the cost of approximation, as described in \cite{Ruland2017QuantitativeAP}. This requires the use of variational techniques, as presented in \cite{Ruland2017QuantitativeAP}, \cite{zuazua}, and \cite{lions1992remarks}. A qualitative version of similar approximation properties for the non-local elliptic model is presented in \cite{ghosh2020calderon}, where the authors investigate the Calder{\'o}n type problem for the fractional heat equation. A comparable result in the $C^k$ space is established in \cite{dipierro2019local}, where the authors demonstrate that any $C^k$ function can be approximated by the smooth solutions to the non-local heat equation, with the approximation property governed by the fractional component of the equation only. However, in this article, we vary either the $N$ dimensional initial data or boundary data, and obtain $\mathrm{L}^2$ approximation on an $N$-dimensional interior slice. In particular, we vary the initial data $f\in C_c^{\infty}(\Omega)$ for the equation with homogeneous Dirichlet boundary data \eqref{varying initial data}, and the singular boundary data $F\in C_c^{\infty}((0,T)\times\partial\Omega)$ for the equation \eqref{varying boundary data}. We demonstrate that for a fix time slice $T>0$, the solution sets $\displaystyle{\{u_f: f\in C_c^{\infty}(\Omega)\}}$ and $\displaystyle{\{u_F: F\in C_c^{\infty}((0,T)\times\partial\Omega)\}}$ corresponding to the equations \eqref{varying initial data} and \eqref{varying boundary data} respectively, are dense in $\mathrm{L}^2(\Omega)$. The application of the integration by parts formula (in the $\mathrm{L}^2$ sense) imposes the following constraint $a\in(\frac12,1)$ on the non-local exponent.  We think such results and techniques executed here could be extended to the classical heat equation i.e., $a=1$.
	\newline
	These results are partially motivated by the stability results for Calderón type inverse problems for the non-local heat equation, as studied in \cite{Ruland2017QuantitativeAP}. Our work can be seen as a continuation of the contributions made in \cite{Ruland2017QuantitativeAP} and \cite{zuazua}, extending the analysis to the boundary controllability in the presence of singular boundary data. Additionally, the study in \cite{ros2014pohozaev} serves as a key motivation for exploring boundary controllability and Calderón type inverse problems in the context of both the singular boundary and the initial data problem. 
	\newline
	The density argument for the case of the singular boundary data \eqref{varying boundary data} requires the Pohozaev estimate, as demonstrated in \cite{ros2014pohozaev}. For small potential, we can prove the density result using observability estimate as described in \cite{biccari2025boundary}. Using the density results and the same arguments presented in the proof of observability estimate, we solve the Calder\'on type inverse problems for both the  non-local heat  equations \eqref{varying initial data}, and \eqref{varying boundary data}, where we analyze the solutions at a particular time slice $T>0$ for \eqref{varying initial data}, and for \eqref{varying boundary data} we analyze the solutions on the boundary. The study of the Calder\'on type inverse problems for the  non-local elliptic equations can be found in \cite{ghosh2017calderon}, \cite{ghosh2020uniqueness}, \cite{salo2017fractional}, \cite{ghosh2021non}. The main results in this article are the following:
	\begin{Thm}\label{Density result initial data}
		Let $a\in(\frac{1}{2},1)$ and let $f\in C_c^{\infty}(\Omega)$. Let $u_f\in\mathrm{L}^2\left((0,T)\times\Omega\right)$ be the solution of \eqref{varying initial data}. Then, for a fix time $\Tilde{T}>0$, the set $\{u_f(\Tilde{T},\cdot): f\in C_c^{\infty}(\Omega)\}$ is dense in $\mathrm{L}^2(\Omega)$.
	\end{Thm}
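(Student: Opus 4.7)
The plan is a Hahn--Banach duality argument: fix $\tilde T>0$ and suppose $g\in L^2(\Omega)$ satisfies $\int_\Omega u_f(\tilde T,x)\,g(x)\,dx=0$ for every $f\in C_c^\infty(\Omega)$; my goal is to deduce $g\equiv 0$. To this end I would introduce the adjoint backward problem
\[
-\partial_t v+(-\Delta)^a v+qv=0\ \text{in }(0,\tilde T)\times\Omega,\qquad v=0\ \text{in }(0,\tilde T)\times(\mathbb{R}^N\setminus\Omega),\qquad v(\tilde T,\cdot)=g,
\]
which, after the time reversal $t\mapsto \tilde T-t$, is a well-posed Cauchy problem producing $v\in C([0,\tilde T];L^2(\Omega))\cap L^2((0,\tilde T);H^a_0(\Omega))$.

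Testing the equation for $u_f$ against $v$, integrating over $(0,\tilde T)\times\Omega$, using the $L^2$-self-adjointness of $(-\Delta)^a$ via the fractional Green's identity on $H^a_0(\Omega)$, and integrating by parts once in time should produce the duality identity
\[
\int_\Omega u_f(\tilde T,x)\,g(x)\,dx\;=\;\int_\Omega f(x)\,v(0,x)\,dx.
\]
Since the left-hand side vanishes for every $f\in C_c^\infty(\Omega)$ and $C_c^\infty(\Omega)$ is dense in $L^2(\Omega)$, this forces $v(0,\cdot)\equiv 0$ in $\Omega$, and together with the exterior Dirichlet condition on all of $\mathbb{R}^N$.

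It then remains to propagate this vanishing to the terminal time and conclude $g=v(\tilde T,\cdot)\equiv 0$. Setting $w(t,x):=v(\tilde T-t,x)$, $w$ solves a forward non-local heat equation with smooth bounded potential $\tilde q(t,x):=q(\tilde T-t,x)$, satisfies $w(0)=g$, and $w(\tilde T)=0$; I thus need backward uniqueness for this parabolic problem. Because $q$ is compactly supported in $(0,T)\times\Omega$, there is an interval on which $\tilde q$ vanishes and $w$ solves the free fractional heat equation, whose Dirichlet semigroup $e^{-t(-\Delta)^a}$ is self-adjoint with positive discrete spectrum; spectral expansion kills $w$ there immediately. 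The vanishing is then propagated across the support of $\tilde q$ by a log-convexity argument of Agmon--Nirenberg type applied to $t\mapsto\|w(t)\|_{L^2(\Omega)}^2$.

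The main technical obstacle is the integration by parts underlying the duality identity: the non-local Green formula must close without leaving uncontrolled boundary contributions, and this is precisely where the standing hypothesis $a\in(1/2,1)$ enters, ensuring that elements of $H^a_0(\Omega)$ admit the trace regularity necessary for the fractional boundary terms to cancel between the two systems. The backward-uniqueness step is essentially classical; the compact support of $q$ in time is a convenient reduction that separates the non-autonomous interval from the autonomous portion where a clean spectral argument applies.
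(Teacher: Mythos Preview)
Your proposal is the paper's first proof: Hahn--Banach, the backward adjoint problem for $g$, and the duality identity $\int_\Omega u_f(\tilde T)\,g=\int_\Omega f\,v(0)$ all match. The only divergence is in the last step, deducing $g=0$ from $v(0)=0$. The paper writes $v(t)=\sum_n e^{-\lambda_n(\tilde T-t)}\langle\phi_n,g\rangle\,\phi_n$ using the eigenpairs of $(-\Delta)^a+q$ and reads off the conclusion immediately; this is shorter but tacitly treats $q$ as autonomous. Your free-interval spectral argument followed by Agmon--Nirenberg log-convexity is heavier machinery, but it is the honest route when $q$ genuinely depends on $t$, and it works as you describe. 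One minor correction: the hypothesis $a>\tfrac12$ is \emph{not} what makes the duality identity close here---both $u_f$ and $v$ live in $H^a$ with zero exterior data, so the symmetric form $\int|\xi|^{2a}\hat u\,\hat v\,d\xi$ suffices for any $a\in(0,1)$ (this is \eqref{integration by parts Hs} in the paper); the restriction $a>\tfrac12$ only becomes essential in Theorem~\ref{Quantitative Density result boundary data}, where one of the two functions sits in the larger space $H^{(a-1)(2a)}$ and genuine boundary traces must be controlled.
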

	\begin{Thm}\label{Quantitative Density result boundary data}
		Let $a\in(\frac{1}{2},1)$ and let $F\in C_c^{\infty}((0,T)\times\partial\Omega)$. Let $\displaystyle{\theta< \frac{1}{2}\left(1+C_{HS}\left(\frac N2+R\right)\right)^{-1}}$, where $C_{HS}$ is the Hardy-Sobolev constant on the domain $\Omega$ and $R:=\max\{\Vert x\Vert, x\in\Omega\}$. Let the potential $q$ is non-negative and $|q(x)|,|\nabla q(x)|\leq \theta$ for all $x\in\Omega$. Let $u_F\in\mathrm{L}^2\left((0,T)\times\Omega\right)$ be the solution of \eqref{varying boundary data}. Then, for a fix time $\Tilde{T}>0$, the set 
		\[
		\left\{u_F(\Tilde{T},\cdot): F\in C_c^{\infty}((0,T)\times\partial\Omega)\right\}\]
		is dense in $\mathrm{L}^2(\Omega)$.
	\end{Thm}	
	\begin{Thm}\label{inverse result initial data}
		Let $a\in(\frac{1}{2},1)$ and let $u_1,u_2\in \mathrm{L}^2((0,T);H^a(\Omega))$ satisfy: for $i=1,2$
		\begin{equation}\label{varying initial data inverse result}
			\left\{
			\begin{aligned}
				\partial_t u_{i}+(-\Delta)^a u_i+ q_iu_i=&0 \qquad \qquad  \mbox{in}\ (0,T)\times\Omega\\
				u_i=&0 \qquad \qquad  \mbox{in} \ (0,T)\times\mathbb{R}^N\setminus\Omega\\
				u_i(0,\cdot)=&f \qquad \qquad  \mbox{in} \ \Omega,
			\end{aligned}
			\right .
		\end{equation}
		where $q_1,q_2\in C_c^{\infty}(\Omega)$ and $f\in C^{\infty}_c(\Omega)$. Furthermore,  $q_1,q_2$ be such that zero isn't an eigenvalue to the same equation with zero initial data. If 
		\[
		\frac{u_1}{d^a}(t,x)= \frac{u_2}{d^a}(t,x), \quad \forall \, (t,x)\in(0,T]\times\partial\Omega \ \mbox{and}\ \forall \, f\in C_c^{\infty}(\Omega),
		\]
		then
		\[
		q_1\equiv q_2.
		\]
	\end{Thm}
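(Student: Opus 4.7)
The plan is to reduce the problem to an Alessandrini-type integral identity and then apply the density Theorems \ref{Density result initial data} and \ref{Quantitative Density result boundary data}. First I would set $w := u_1 - u_2$, which solves
\begin{equation*}
    \partial_t w + (-\Delta)^a w + q_1 w = (q_2 - q_1) u_2 \quad \text{in } (0, T) \times \Omega,
\end{equation*}
with zero initial and exterior data, and crucially $w/d^a \equiv 0$ on $(0, T] \times \partial\Omega$ by hypothesis. As the test object I would take $v = v_G$ solving the time-reversed analog of \eqref{varying boundary data}, namely $-\partial_t v + (-\Delta)^a v + q_1 v = 0$ in $(0,T) \times \Omega$ with $v = 0$ in $\mathbb{R}^N \setminus \overline\Omega$, singular trace $\lim_{x \to \partial\Omega} v/d^{a-1} = G$ for arbitrary $G \in C_c^\infty((0,T) \times \partial\Omega)$, and $v(T, \cdot) = 0$.

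Next I would pair the $w$-equation with $v$ and integrate over $(0, T) \times \Omega$. The time-boundary terms vanish because $w(0, \cdot) = 0$ and $v(T, \cdot) = 0$. For the fractional Laplacian piece I would invoke the Green's identity underlying the Pohozaev formula of \cite{ros2014pohozaev},
\begin{equation*}
    \int_\Omega ((-\Delta)^a w)\, v - w\, ((-\Delta)^a v) \, dx = \Gamma(1+a)^2 \int_{\partial\Omega} \frac{w}{d^a} \cdot \frac{v}{d^{a-1}} \, d\sigma,
\end{equation*}
whose right-hand side vanishes by hypothesis. Since $v$ annihilates the remaining bulk occurrence of $w$, what survives is the Alessandrini identity
\begin{equation*}
    \int_0^T \!\! \int_\Omega (q_1 - q_2) \, u_2 \, v_G \, dx \, dt = 0, \qquad \forall \, f \in C_c^\infty(\Omega), \ \forall \, G \in C_c^\infty((0,T) \times \partial\Omega).
\end{equation*}

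To finish, I would apply Theorem \ref{Density result initial data} to $u_2$ to get $\{u_2(t_0, \cdot) : f\}$ dense in $L^2(\Omega)$, and the time-reversed version of Theorem \ref{Quantitative Density result boundary data} to get $\{v_G(t_0, \cdot) : G\}$ dense in $L^2(\Omega)$, for every $t_0 \in (0,T)$. Combining these with the continuity in $t$ of both the forward and backward semigroups, one constructs, for any $t_0 \in (0,T)$ and any $U, V \in L^2(\Omega)$, pairs $(f, G)$ whose product $u_2 v_G$ approximates a short-time-supported bump $\phi(t) U(x) V(x)$ in $L^1((0,T) \times \Omega)$. Since tensor products $U(x)V(x)$ are total in $L^1(\Omega)$ and $t_0$ is arbitrary, this forces the smooth, compactly supported function $q_1 - q_2$ to be identically zero.

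The main obstacle I anticipate is the rigorous justification of the Green's identity in the second paragraph: it pairs $w$, whose trace is controlled only in the renormalized sense $w/d^a$, against $v$, which behaves like $d^{a-1}$ near $\partial\Omega$. The boundary integral only converges as an $L^2$ trace when $a > 1/2$, which is precisely the exponent restriction of the theorem, and the proof must also verify the well-posedness and trace extraction for the backward singular-boundary problem, borrowing from the analysis of \eqref{varying boundary data}. A secondary technical point is the density step: upgrading the fixed-time-slice density results into near-concentration of the space-time product $u_2 v_G$ requires a careful semigroup-continuity argument, slightly complicated by the time-dependence of $q_1$.
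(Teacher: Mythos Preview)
Your derivation of the Alessandrini-type identity
\[
\int_0^T\!\!\int_\Omega (q_1-q_2)\,u_2\,v_G\,dx\,dt=0,\qquad \forall\,f\in C_c^\infty(\Omega),\ \forall\,G\in C_c^\infty((0,T)\times\partial\Omega),
\]
is correct (up to the precise constant in the Green formula, which in the paper is $\Gamma(a)\Gamma(a+1)$ rather than $\Gamma(1+a)^2$; cf.\ \eqref{integration by parts}). The route, however, diverges from the paper, and the place where it breaks down is the step you label ``secondary''.

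\medskip
\textbf{The gap.} From the identity above you propose to choose $(f,G)$ so that the space--time product $u_2\,v_G$ approximates a bump $\phi(t)U(x)V(x)$ in $L^1((0,T)\times\Omega)$. This is not supported by Theorems~\ref{Density result initial data} and~\ref{Quantitative Density result boundary data}: those give density of the \emph{time-slice} sets $\{u_2(t_0,\cdot)\}$ and $\{v_G(t_0,\cdot)\}$ in $L^2(\Omega)$, not any kind of time localisation. A forward solution $u_2^f$ is nonzero on all of $(0,T)$, and a backward solution $v_G$ with $G$ supported in $(t_1,t_2)$ is nonzero on all of $(0,t_2)$; neither can be made to concentrate near a single time. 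In fact, for fixed $f$ the identity for all $G$ is \emph{equivalent} to the hypothesis $w/d^a\equiv 0$ (one sees this by introducing $\Psi$ with $\partial_t\Psi+(-\Delta)^a\Psi+q_1\Psi=(q_1-q_2)u_2$, $\Psi(0)=0$, and observing $\Psi=-w$), so no information has yet been extracted. What is needed is a genuine ``completeness of products of solutions'' statement in space--time, and the sketch you give does not provide one.

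\medskip
\textbf{What the paper does instead.} The paper first transfers the hypothesis to the adjoint system (this is the content of the proof of Theorem~\ref{inverse result initial data}, which reduces it to Theorem~\ref{inverse result boundary data}). Then, rather than pairing against a second family of solutions, it applies the Pohozaev identity \eqref{Phozaev identity} directly to the difference $w_1-w_2$ of the adjoint Dirichlet solutions. Combined with the energy identity and a Hardy--Littlewood--Sobolev inequality (exactly as in Proposition~\ref{observability estimate}), the vanishing of $(w_1-w_2)/d^a$ on the boundary forces
\[
\|w_1-w_2\|_{L^2((t_1,t_2)\times\Omega)}=0\qquad\text{for all }0\le t_1<t_2\le T,
\]
i.e.\ $w_1\equiv w_2$ in \emph{all of} $(0,T)\times\Omega$. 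This immediately yields the pointwise relation $(q_1-q_2)\,w_1\equiv 0$, after which a \emph{single} application of Theorem~\ref{Density result initial data} at each fixed time $t$ gives $q_1(t,\cdot)=q_2(t,\cdot)$. The Pohozaev/observability step is precisely the mechanism that upgrades boundary vanishing to interior vanishing and thereby eliminates the need for any space--time completeness-of-products argument. Your plan is missing an analogue of this step.
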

	\begin{Thm}\label{inverse result boundary data}
		Let $a\in(\frac12,1)$ and let $u_1,u_2\in \mathrm{L}^2((0,T);H^{a-1(2a)}(\overline{\Omega}))$ satisfy: for $i=1,2$	
		\begin{equation}\label{varying boundary data inverse result}
			\left\{
			\begin{aligned}
				\partial_t u_{i}+(-\Delta)^a u_i+ q_iu_i=&0 \qquad \qquad \ \mbox{in}\ (0,T)\times\Omega\\
				u_i=&0 \qquad \qquad \  \mbox{in} \ (0,T)\times\mathbb{R}^N\setminus\overline{\Omega}\\
				\lim\limits_{\substack{x\to y\\ y\in\partial\Omega}}\frac{u_i}{d^{a-1}}(y)=& F \qquad \qquad \mbox{on} \ (0,T)\times\partial\Omega\\
				u_i(0,\cdot)=&0 \qquad \qquad  \ \mbox{in} \ \Omega,
			\end{aligned}
			\right .
		\end{equation}
		where $q_1,q_2\in C_c^{\infty}(\Omega)$ and $F\in C_c^{\infty}((0,T)\times\partial\Omega)$. Furthermore,  $q_1,q_2$ be such that zero is not an eigenvalue to \eqref{varying initial data inverse result} with zero boundary data. If
		\[
		u_1(T,\cdot)\equiv u_2(T,\cdot) \qquad \forall \, F\in C_c^{\infty}((0,T)\times\partial\Omega)
		\]
		then
		\[
		q_1\equiv q_2.
		\]
	\end{Thm}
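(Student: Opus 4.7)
The approach is the classical reduction to a bilinear identity followed by a density argument. Set $w := u_1 - u_2$. Then $w$ satisfies
\begin{equation*}
(\partial_t + (-\Delta)^a + q_1) w = (q_2 - q_1) u_2 \quad \text{in } (0,T) \times \Omega,
\end{equation*}
with $w \equiv 0$ outside $\overline{\Omega}$, $\lim_{x \to \partial\Omega} w/d^{a-1} = 0$ (because $u_1$ and $u_2$ share the same singular boundary datum $F$), $w(0,\cdot) = 0$, and, crucially via the hypothesis, $w(T,\cdot) = 0$. For any $g \in C_c^\infty(\Omega)$, I would then introduce the backward adjoint $v$ solving $(-\partial_t + (-\Delta)^a + q_1) v = 0$ in $(0,T)\times\Omega$, $v = 0$ outside $\Omega$, $v(T,\cdot) = g$. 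Since zero is not an eigenvalue of the forward problem, the time-reversed formulation shows that the backward problem is well-posed.

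Testing the $w$-equation against $v$ and integrating over $(0,T)\times\Omega$, all time-boundary contributions vanish because $w(0,\cdot) = w(T,\cdot) = 0$. The spatial integration by parts is carried out using the Pohozaev-type identity from \cite{ros2014pohozaev}, the same tool that powered Theorems \ref{Density result initial data} and \ref{Quantitative Density result boundary data}. That identity produces a boundary contribution proportional to $\int_0^T\int_{\partial\Omega} (w/d^{a-1})(v/d^a)\, d\sigma\, dt$, which vanishes thanks to $w/d^{a-1}|_{\partial\Omega} \equiv 0$. The outcome is the bilinear identity
\begin{equation*}
\int_0^T \int_\Omega (q_1-q_2)(t,x)\, u_2(t,x)\, v(t,x)\, dx\, dt = 0,
\end{equation*}
valid for every admissible $F$ (parametrizing $u_2$) and every admissible $g$ (parametrizing $v$).

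To deduce $q_1 \equiv q_2$, I would exploit the density theorems already established. By Theorem \ref{Quantitative Density result boundary data} the set $\{u_2(\tau,\cdot) : F \in C_c^\infty\}$ is $\mathrm{L}^2(\Omega)$-dense at each $\tau \in (0,T]$; applying Theorem \ref{Density result initial data} to the time-reversed adjoint equation (whose potential $(t,x)\mapsto q_1(T-t,x)$ is still in $C_c^\infty((0,T)\times\Omega)$) gives the analogous density for $\{v(\tau,\cdot) : g \in C_c^\infty\}$. To convert these slice-wise densities into a pointwise statement on $(0,T)\times\Omega$, I would localize in time: pick $t_0\in(0,T)$ and choose $F$ supported in a narrow window ending at $t_0$, so that $u_2$ is essentially zero on $(0,t_0-\varepsilon)$ and for $t > t_0$ propagates freely from a profile that can approximate any $L^2(\Omega)$ function; choose $g$ symmetrically so $v$ backward-propagates from an essentially arbitrary profile at $t_0+\varepsilon$. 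Passing to the limit $\varepsilon\to 0$, the bilinear identity reduces at almost every $t_0$ to $\int_\Omega (q_1-q_2)(t_0,x)\, h_1(x) h_2(x)\, dx = 0$ for essentially arbitrary $h_1,h_2 \in L^2(\Omega)$, which forces $q_1(t_0,\cdot) = q_2(t_0,\cdot)$ and hence $q_1 \equiv q_2$.

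The principal obstacle is this time-localization / product-density step: it demands quantitative estimates on how singular-boundary solutions driven by data concentrated in a very short interval relax to a freely propagating solution, and that the free profiles still fill $L^2(\Omega)$ densely in the $\varepsilon\to 0$ limit. Such estimates are of the same flavour as the observability bounds underlying Theorem \ref{Quantitative Density result boundary data} and should be obtainable by refining those energy arguments, but the bookkeeping is delicate because the singular boundary trace interacts with the short-window concentration. A secondary technicality is the justification of the integration-by-parts identity at the regularity class stated in the hypothesis, which I would handle by a density argument approximating the data by smoother objects for which the Ros-Oton--Serra identity applies directly.
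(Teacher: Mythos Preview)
Your derivation of the bilinear identity
\[
\int_0^T\!\!\int_\Omega (q_1-q_2)\,u_2\,v\,\mathrm{d}x\,\mathrm{d}t=0
\]
is correct (though what you call the ``Pohozaev-type identity'' there is really the integration-by-parts formula \eqref{integration by parts}, not \eqref{Phozaev identity}). The genuine gap is exactly where you flag it: the time-localization / product-density step. Slice-wise density of $\{u_2(\tau,\cdot)\}$ and $\{v(\tau,\cdot)\}$ does not yield density of the products $u_2 v$ in $L^2((0,T)\times\Omega)$, because varying $F$ or $g$ changes the solution globally in time; your proposed concentration argument does not close, since even if $F$ is supported in $(t_0-\varepsilon,t_0)$ the solution $u_2$ persists for all $t>t_0$ and $v$ is nonzero on all of $(0,T)$, so the integral never reduces to a single time slice. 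Making a rigorous ``completeness of products'' argument for this parabolic setting is a substantial problem in its own right, not a bookkeeping detail.

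The paper circumvents this entirely by a different route. Instead of working with $u_1-u_2$, it introduces \emph{adjoint} solutions $w_i$ of $-\partial_t w_i+(-\Delta)^a w_i+q_i w_i=0$, $w_i(T,\cdot)=f$, and uses the hypothesis $u_1(T,\cdot)=u_2(T,\cdot)$ together with \eqref{integration by parts solutions} to deduce that the \emph{boundary traces} agree: $\dfrac{w_1}{d^a}=\dfrac{w_2}{d^a}$ on $(0,T)\times\partial\Omega$. Then the actual Pohozaev identity \eqref{Phozaev identity} is applied to $w_1-w_2$, exactly as in the observability estimate (Proposition~\ref{observability estimate}), and the vanishing boundary trace forces $\displaystyle\int|(-\Delta)^{a/2}(w_1-w_2)|^2=0$, hence $w_1\equiv w_2$ \emph{pointwise}. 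This immediately gives $(q_1-q_2)w_1\equiv 0$ as a pointwise relation in $(0,T)\times\Omega$, so slice-wise density (Theorem~\ref{Density result initial data}) suffices with no time-localization whatsoever. The key idea you are missing is that Pohozaev is used not merely for integration by parts but as an \emph{observability tool} on the difference of adjoint solutions, turning an integral identity into a pointwise one.
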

	In fact, we will show that the statements of the theorem \ref{inverse result initial data} and the theorem \ref{inverse result boundary data} are equivalent. Next we describe the solution spaces, well posedness of the solutions and various regularity results. Many of the cases, the notion of the solutions for the non-local heat equation can be extended to the non-local  heat equation with the perturbation potential $q$, as the potential $q$ lies in $C_c^{\infty}(\Omega)$ \cite{pazy2012semigroups}, \cite{bogdan2012estimates}. 
	
	\subsection{Preliminaries: Fractional Laplacian and fractional Sobolev space}	
	Let $0< a< 1$. The fractional Laplacian operator $(-\Delta)^a$ is defined over the space of Schwartz class functions $\mathcal{S}(\mathbb{R}^N)$ as
	\[
	(-\Delta)^a u(x):= \mathcal{F}^{-1}\left\{|\xi|^{2a}\Hat{u}(\xi)\right\}, \quad \forall \, x\in\mathbb{R}^N,
	\]
	where $\Hat{.}$ and $\mathcal{F}^{-1}$ denotes the Fourier and the inverse Fourier transformations respectively. There are many equivalent definitions of fractional Laplacian [see \cite{kwasnicki2017ten}]. One of such equivalent definition we use throughout this article is  given by the Cauchy principle value: for $0\leq a\leq 1$
	\[
	(-\Delta)^a u(x)= C_{N,a} \ \mbox{p.v} \ \int_{\mathbb{R}^N} \frac{u(x)-u(y)}{| x-y|^{N+2a}} \, \rm{d}y, \qquad \forall \, x\in\mathbb{R}^N, 
	\] 
	where $\displaystyle{C_{N,a}=\frac{4^a\Gamma\left(\frac{N}{2}+a\right)}{\pi^{\frac{N}{2}}\Gamma(-a)}}$. Throughout the article, $\Gamma$ stands for the usual Gamma function. We define the following Sobolev space: for $s\in(0,1)$
	\[
	H^s(\mathbb{R}^N):= \{ u\in\mathbb{S}'(\mathbb{R}^N): \langle \xi\rangle^{s}\Hat{u}\in \mathrm{L}^2(\mathbb{R}^N)\},
	\]
	where $\displaystyle{\langle \xi\rangle=(1+\Vert \xi\Vert_{2}^2)^{\frac{1}{2}}}$ and $\mathbb{S}'(\mathbb{R}^N)$ is the space of tempered distributions in $\mathbb{R}^N$, equipped with the norm
	\[
	\Vert u\Vert_{H^s(\mathbb{R}^N)}= \Vert \langle \xi\rangle^s \Hat{u}\Vert_{\mathrm{L}^2(\mathbb{R}^N)}.
	\]
	The fractional Laplacian extends as a bounded linear map 
	\[
	(-\Delta)^a: H^s(\mathbb{R}^N) \to H^{s-2a}(\mathbb{R}^N).
	\]
	We also introduce few spaces that are essential for describing the solution spaces and various regularity results throughout the article.
	\begin{align*}
		H^s(\Omega):=& \{u= v|_{\Omega}: v \in H^s(\mathbb{R}^N)\}\\
		C^s(\Omega):=& \{u\in C(\Omega): |u(x)-u(y)|\leq M|x-y|^s, \, \forall \, x,y\in \Omega, M\geq0\}\\
		C^s_c(\mathbb{R}^N):=& \{ u\in C^s(\mathbb{R}^N): \, \mbox{supp}\, u \ \mbox{compactly contained in}\ \mathbb{R}^N\},
	\end{align*}
	where $\Omega$ is a bounded $C^{1,1}$ domain and $s\in(0,1)$. We denote $v|_{\Omega}$ as the restriction of the function $v$ in $\Omega$. $H^s(\Omega)$ is a Sobolev space equipped with the norm 
	\[
	\Vert u\Vert_{H^s(\Omega)}= \Vert v\Vert_{H^s(\mathbb{R}^N)}= \Vert \langle \xi\rangle^s \Hat{v}\Vert_{\mathrm{L}^2(\mathbb{R}^N)}.
	\] 
	The definition of the space $C^s(\Omega)$ can be extended to the set when $\Omega$ is closed i.e., $\Omega=\overline{\Omega}$ or $\Omega=\mathbb{R}^N$. We define the following seminorm associated with the space $C^s(\Omega)$:
	\[
	[u]_{C^s(\Omega)}:= \inf_{M\geq 0} \left \{M: |u(x)-u(y)|\leq M |x-y|^s, \, \forall \, x,y\in\Omega \right\}.
	\]
	The spaces $C_c^s(\mathbb{R}^N)$ and $C^s(\overline{\Omega})$ $(\Omega$ is bounded$)$  can be endowed with the following norms respectively
	\begin{align*}
		\Vert u\Vert_{C^s_c(\mathbb{R}^N)}:= &\sup\limits_{x\in \mathbb{R}^N} |u(x)|+ [u]_{C^s(\mathbb{R}^N)},\\
		\Vert u\Vert_{C^s(\overline{\Omega})}:= &\sup\limits_{x\in\overline{\Omega}} |u(x)|+ [u]_{C^s(\Omega)}.
	\end{align*}
	Note that $C^s(\overline{\Omega})$ is a Banach space with respect to the norm described above.
	\newline
	The definition of $C^s(\Omega)$ $($or $C_c^s(\mathbb{R}^N))$ can be extended to any $\beta>0$. We can express $\beta=k+s$, where $k\in\mathbb{N}\cup\{0\}$ and $s\in(0,1)$. The space $C^{\beta}(\Omega)$ is the following
	\[
	C^{\beta}(\Omega):=\left\{ D^{\alpha}u\in C(\Omega): |D^{k}u(x)-D^ku(y)|\leq M|x-y|^s, \, \forall \, x,y\in\Omega, M\geq0\right\}
	\]
	where $\displaystyle{\alpha=(\alpha_1,\cdots,\alpha_N)\in(\mathbb{N}\cup\{0\})^N}$ and $\displaystyle{\sum\limits_{i=1}^{N}\alpha_i \leq k}$ and $D^{\alpha}$ is the classical derivative of order $\alpha$.
	
	\subsection{Wellposedness of the solution}
	Let us define the following space: for $s\in(0,1)$
	\[
	\mathrm{L}^1\left(\Omega,d^s\right):=\left\{f: \int_{\Omega} fd^s dx <+\infty\right\}. 
	\]
	The wellposedness of the two equations has been studied in \cite{chan2022singular}. It has been shown that under the assumption $f\in \mathrm{L}^1(\Omega,d^a)$ there exists a solution $u_f\in \mathrm{L}^1((0,T); \mathrm{L}^1(\Omega, d^a))$ to \eqref{varying initial data}. In the same article \cite{chan2022singular}, it has been shown that under the assumption $F\in\mathrm{L}^1((0,T)\times\partial\Omega)$ there exists a solution $u_f\in \mathrm{L}^1((0,T); \mathrm{L}^1(\Omega, d^a))$. We choose $q$ such that zero is not an eigenvalue for both the equations \eqref{varying initial data}, \eqref{varying boundary data} i.e., if $u_0\in\mathrm{L}^1((0,T); \mathrm{L}^1(\Omega, d^a))$ satisfies either \eqref{varying initial data} or \eqref{varying boundary data}, corresponding to $f\equiv 0$ or $F\equiv 0$ respectively, then $u_0\equiv 0$. This ensures the uniqueness of the solutions also. 
	\newline
	Let $P(t,x,y)$ be the fractional heat semigroup corresponding to the operator $\partial_t+(-\Delta)^a+q$ with homogeneous Dirichlet boundary condition and $\delta_x$(Dirac delta) as initial condition. Then the quantity $\displaystyle{\lim\limits_{\substack{y\to w\\ w\in\partial\Omega}}\frac{P(t,x,y)}{d^{a-1}(y)}}$ is a well defined quatity [see \cite{chan2022singular}]. Well defineness of that particular quantity comes from the existence of the same limit function for the corresponding elliptic Green function with homogeneous Dirichlet boundary condition [see \cite{abatangelo2023singular}].  The solution $u_f$ corresponding to \eqref{varying initial data} or $u_{F}$ corresponding to \eqref{varying boundary data} can be uniquely expressed as 
	\begin{align*}
		u_f(t,x)= \int_{\Omega}P(t,x,y)f(y) dy\\
		u_{F}(t,x)= \int_{0}^{t}\int_{\partial\Omega} \lim\limits_{\substack{y\to w\\ w\in\partial\Omega}}\frac{S(t-s,x,y)}{d^{a-1}(y)} F(s,w) \, \rm{d}w \, \rm{d}s. 
	\end{align*}
	We can further show if $f\in\mathrm{L}^2(\Omega)$, then the solution to \eqref{varying initial data} belongs to the following space 
	\[
	u_f\in\mathrm{L}^2((0,T);  H^a(\mathbb{R}^N)),
	\] 
	given by the same expression above [see \cite{chan2022singular},  \cite{grubb2017fractional}]. The smoothness of the fractional heat semigroup for $t>0$ and away from boundary $\partial\Omega$ \cite{fernandez2016boundary}, \cite{dong2023time} implies $u_f \in C^{\infty}\left((0,T)\times\Omega\right)$.   The fractional heat semigroup can be expressed in the following way:
	\[
	P(t,x,y)=\sum\limits_{n\in\mathbb{N}}e^{-t\lambda_n}\phi_n(x)\phi_n(y)
	\]
	where $\lambda_1<\lambda_2\leq \dots\leq \lambda_n\leq\dots+\infty$ are the eigenvalues and $\phi_n$'s are the normalized eigenfunctions of the following homogeneous Dirichlet  problem
	\begin{equation}\label{eigen function equation}
		\left \{
		\begin{array}{ll}
			((-\Delta)^a+q)\phi_n=\lambda_n\phi_n \qquad \qquad&\mbox{in}\ \Omega  \\
			\phi_n=0 & \mbox{in}\ \mathbb{R}^N\setminus \Omega. 
		\end{array}
		\right .
	\end{equation}
	Various interior regularity results corresponding to \eqref{varying initial data} can be found in \cite{ros2018higher}, \cite{fernandez2016boundary}. In \cite{fernandez2016boundary}, authors have proved the following results corresponding to \eqref{varying initial data}:
	\begin{itemize}
		\item [$($a$)$] for each $t_0>0$
		\begin{align}\label{H0}
			\sup\limits_{t\geq t_0}\Vert u_f(t,\cdot)\Vert_{C_c^a(\mathbb{R}^N)}\leq C_1(t_0)\Vert f\Vert_{\mathrm{L}^2(\Omega)},
		\end{align}
		\item [$($b$)$] for each $t_0>0$
		\begin{align}\label{H1}
			\sup\limits_{t\geq t_0}\left\Vert \frac{u_f(t,\cdot)}{d^a}\right\Vert_{C^{a-\epsilon}\left(\overline{\Omega}\right)}\leq C_1(t_0)\Vert f\Vert_{\mathrm{L}^2(\Omega)},
		\end{align}
		\item [$($c$)$] for each $t_0>0$
		\begin{align}\label{H4}
			\sup\limits_{t\geq t_0} \left\Vert \frac{d^j}{dt^j}u_f(t,\cdot)\right\Vert_{C^a_c(\mathbb{R}^N)} \leq C_{1,j}(t_0) \Vert f\Vert_{\mathrm{L}^2(\Omega)}
		\end{align}
	\end{itemize}
	for any $\epsilon\in(0,a)$. Here $C_1(t_0)$,$C_{1,j}(t_0)$ are positive constants. Authors used various eigenfunction estimates  corresponding to \eqref{eigen function equation} [see \cite{grubb2015spectral}, \cite{grubb2015fractional}, \cite{abatangelo2023singular}]. The following interior regularity results can be found in  \cite{fernandez2016boundary}.
	\begin{Thm}[\cite{fernandez2016boundary}]
		Let $u_f\in\mathrm{L}^2\left((0,T); H^{a}(\Omega)\right)$ be the solution to \eqref{varying initial data} with $f\in \mathrm{L}^2(\Omega)$. Then the following holds.
		\begin{itemize}
			\item [$\bullet$:] For $t>0$, $u_f(t,\cdot)\in C^a(\mathbb{R}^N)$ and, for every $\beta\in[a,1+2a)$, $u_f(t,\cdot)$ is of class $C^{\beta}(\Omega)$ and 
			\begin{align}\label{H2}
				[u_f(t,\cdot)]_{C^{\beta}(\Omega_{\delta})} \leq C \delta^{a-\beta}, \forall \delta\in(0,1).
			\end{align}
			\item [$\bullet$:] The function $\displaystyle{\frac{u_f(t,\cdot)}{d^a}}\Big|_{\Omega}$ can be continuously extended to $\overline{\Omega}$. Moreover, there exists $\alpha\in(0,1)$ such that $\displaystyle{\frac{u_f}{d^a}\in C^{\alpha}(\overline{\Omega})}$. In addition, for $\beta\in[\alpha,a+\alpha]$, the following estimate holds
			\begin{align}\label{H3}
				\left[\frac{u_f}{d^a}\right]_{C^{\beta}(\Omega_{\delta})} \leq C \delta^{a-\beta}, \forall \delta\in(0,1).
			\end{align}
		\end{itemize}
		Here $\Omega_{\delta}:=\{x\in\Omega: d(x,\partial\Omega)\geq\delta\}$ and $[\cdot]_{C^{\beta}(\Omega_{\delta})}$ denotes the H\"older seminorm:
		\[
		\left[\frac{u_f(t,x)}{d^a}\right]_{C^{\beta}(\Omega_{\delta})} =\sup\limits_{x,y\in C^{\beta}(\Omega_{\delta}), x\neq y} \frac{\vert D^{\floor{\beta}}u_f(t,x)-D^{\floor{\beta}}u_f(t,y)\vert}{\vert x-y\vert^{\beta-\floor{\beta}}}
		\]
		where $\floor{\beta}$, denotes the greatest integer function i.e $\floor{\beta}:=\sup\{z\in\mathbb{Z}: z\leq \beta\}$. 
	\end{Thm}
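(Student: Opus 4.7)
The plan is to exploit the spectral representation $u_f(t,x)=\sum_{n}e^{-t\lambda_{n}}\phi_{n}(x)\bigl(\int_{\Omega}\phi_{n}f\bigr)$ together with the eigenfunction regularity results referenced from \cite{grubb2015spectral,grubb2015fractional,abatangelo2023singular}, namely $\phi_{n}\in C^{a}(\mathbb{R}^{N})$ and $\phi_{n}/d^{a}\in C^{a-\epsilon}(\overline{\Omega})$, with spectral bounds on the growth of these norms in $n$. The exponential decay factor $e^{-t\lambda_{n}}$ for $t>0$ absorbs all polynomial growth in $n$, so termwise summation yields the two baseline estimates \eqref{H0} and \eqref{H1} that are quoted as an input. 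Differentiating the series in $t$ produces \eqref{H4} by the same mechanism, and in particular $\partial_{t}u_{f}(t,\cdot)\in C^{a}(\mathbb{R}^{N})$ for each $t>0$.

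To obtain the interior estimate \eqref{H2} I would then view the equation as a pointwise identity on $\Omega$,
\begin{equation*}
(-\Delta)^{a}u_{f}(t,\cdot)=-\partial_{t}u_{f}(t,\cdot)-q(t,\cdot)u_{f}(t,\cdot),
\end{equation*}
and bootstrap using the interior Schauder theory for the fractional Laplacian (as developed in \cite{ros2018higher} and used in \cite{fernandez2016boundary}): if $(-\Delta)^{a}w\in C^{s}$ locally and $w\in C^{a}(\mathbb{R}^{N})$ globally, then $w\in C^{s+2a}$ interior-wise with the scaling $\delta^{a-\beta}$ coming from the distance to the boundary. The right-hand side above lies in $C^{a}$ uniformly by \eqref{H0} and \eqref{H4} together with smoothness of $q$, which gives $\beta<3a$; to push $\beta$ all the way up to $1+2a$ one iterates, using that higher time derivatives belong to $C^{a}$ as well. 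Rescaling each interior ball $B_{\delta/2}(x_{0})\subset\Omega_{\delta}$ to unit size converts the Schauder estimate into the stated $\delta^{a-\beta}$ factor.

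For the boundary piece I would use the Ros-Oton-Serra boundary regularity theory, which says that solutions of $(-\Delta)^{a}v=g$ with homogeneous exterior data satisfy $v/d^{a}\in C^{\alpha}(\overline{\Omega})$ for some $\alpha=\alpha(N,a,\Omega)\in(0,1)$, depending only on boundedness of $g$. Applied at each fixed $t>0$ with $g=-\partial_{t}u_{f}-q u_{f}$, bounded uniformly by \eqref{H0} and \eqref{H4}, this gives $u_{f}(t,\cdot)/d^{a}\in C^{\alpha}(\overline{\Omega})$; combining with \eqref{H1} one can take the same $\alpha$ throughout. The refined interior estimate \eqref{H3} for $u/d^{a}$ then follows from differentiating the quotient $u/d^{a}$ inside $\Omega_{\delta}$, where $d$ is smooth and bounded below by $\delta$, and applying \eqref{H2} to $u$ itself, which is why the admissible range for $\beta$ is shifted to $[\alpha,a+\alpha]$.

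The genuinely delicate step is the one that breaks scaling, namely promoting regularity past $\beta=2a$ in \eqref{H2}: near $\partial\Omega$ the solution behaves like $d^{a}$, which is only $C^{a}$ globally, so one cannot differentiate twice without using the distance weighting, and the eigenfunction side of the argument must be combined with sharp interior Schauder bounds that track the loss at the boundary. I expect this bootstrap, and the verification that the constants in the iterated application of interior estimates are summable against $e^{-t\lambda_{n}}$ with the right power of $\delta$, to be the main technical obstacle; everything else reduces to assembling \eqref{H0}, \eqref{H1}, \eqref{H4}, the eigenfunction estimates, and the Ros-Oton-Serra boundary theory.
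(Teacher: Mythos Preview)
The paper does not give its own proof of this theorem; it is quoted verbatim from \cite{fernandez2016boundary} and introduced with the sentence ``The following interior regularity results can be found in \cite{fernandez2016boundary}.'' There is therefore no in-paper argument to compare against. Your sketch is in line with the strategy the paper attributes to that reference: use the spectral representation together with eigenfunction bounds from \cite{grubb2015spectral,grubb2015fractional,abatangelo2023singular} to obtain the baseline estimates \eqref{H0}, \eqref{H1}, \eqref{H4}, and then, at each fixed $t>0$, feed the right-hand side $-\partial_{t}u_{f}-qu_{f}$ into the elliptic interior and boundary regularity theory of Ros-Oton and Serra to bootstrap to \eqref{H2} and \eqref{H3}. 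That is precisely the route the cited paper takes, so your proposal is consistent with the intended proof.
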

	With \eqref{H0}, \eqref{H1}, \eqref{H2}, \eqref{H3} in hand, the solution $u_f$ corresponds to \eqref{varying initial data} satisfies the fractional version of Pohozaev identity. For all $t>0$:
	\begin{align}\label{Phozaev identity}
		\int_{\Omega}(x\cdot \nabla u_f)(-\Delta)^a u_f \, \rm{d}x= \frac{2a-N}{2}\int_{\Omega}&u_f(-\Delta)^a u_f \, \rm{d}x\\
		&- \frac{\Gamma(1+a)^2}{2}\int_{\partial\Omega}\left(\frac{u_f}{d^a}\right)^2(x\cdot\nu) \, \rm{d}\sigma,\nonumber
	\end{align}
	where $\nu$ is the unit outward normal to $\partial\Omega$ at the point $x$ and $\Gamma$ is the gamma function. Proof of this statement is established in the articles \cite{ros2014pohozaev}, \cite{fernandez2016boundary}. 
	\newline
	Next we introduce $\mu$ transmission spaces.
	\subsection{$\mu$ transmission spaces}
	All the notations and definitions here are well explained in \cite{grubb2014local}, \cite{grubb2015fractional}, \cite{grubb2016regularity}, \cite{grubb2016integration}, \cite{grubb2018green}, \cite{Grubb2020ExactGF}. We only recall some important definitions and concepts here. 
	\newline
	We consider operators acting on the upper or lower half subspaces of $\mathbb{R}^N$ denoted by $\displaystyle{\mathbb{R}^N_{\pm}:=\{(x_1,\cdots,x_N)\in\mathbb{R}^N: x_N\gtrless 0\}}$. Similarly with respect to the boundary $\partial\Omega$ we divide a $C^{1,1}$ domain $\Omega$ into two parts. One part is the domain $\Omega$ itself and the other is the complement of it's closure $\displaystyle{\mathbb{R}^N\setminus\overline{\Omega}}$. We denote $r^{\pm}$ as the restriction operator from $\mathbb{R}^{N}$ to  $\displaystyle{\mathbb{R}^N_{\pm}}$ $\Big($or from $\mathbb{R}^{N}$ to $\Omega$ respectively $\displaystyle{\mathbb{R}^N\setminus\overline{\Omega}}\Big)$. The extension by zero from $\displaystyle{\mathbb{R}^N\setminus\overline{\Omega}}$ to $\mathbb{R}^N$ $\Big($or from $\Omega$ respectively $\displaystyle{\mathbb{R}^N\setminus\overline{\Omega}}$ to $\mathbb{R}^{N}\Big)$ is denoted by $e^{\pm}$.
	\newline
	Let us denote a function $\tilde{d}$ which is of the form $d(x):=\text{dist}(x,\partial\Omega)$ near boundary and extended smoothly into a positive function in $\Omega$. We define the space
	\begin{align}\label{density in transmission space}
	\mathcal{E}_{\mu}\left(\overline{\Omega}\right):= e^{+}\left\{u(x)=\tilde{d}^{\mu}v(x): v\in C^{\infty}\left(\overline{\Omega}\right)\right\},
	\end{align}
	for Re$(\mu)>-1$. For general $\mu$, we refer readers to \cite{grubb2015fractional}, \cite{seeley1971norms}. Next we define pseudodifferential operator ($\psi$do) $\mathcal{P}$ on $\mathbb{R}^N$, which is defined from a symbol $p(x,\xi)$ on $\displaystyle{\mathbb{R}^N\times \mathbb{R}^N}$ in the following way:
	\begin{align*}
		\mathcal{P}u=p(x,D)u=\text{Op}((p(x,\xi))u= (2\pi)^{-N}\int e^{ix\dot\xi}p(x,\xi)\hat{u} \, \rm{d}\xi=\mathcal{F}_{\xi\to x}^{-1}((p(x,\xi)\hat{u}(\xi)),
	\end{align*}
	where $\hat{u}$ is the Fourier transformation of $u$. For the theory of calculus, we refer to books and notes like \cite{hormander1963linear}, \cite{hormander1966seminar}, \cite{taylor2006pseudo}, \cite{grubb2008distributions}. 
	We define the symbol space $S^m_{1,0}\left(\mathbb{R}^N\times \mathbb{R}^N\right)$ as follows:
	\begin{equation*}
		S^m_{1,0}:=
		\left\{
		\begin{aligned}
			p(x,\xi)\in C^{\infty}\left(\mathbb{R}^N\times \mathbb{R}^N\right): &\ \vert \partial_{x}^{\beta}\partial_{x}^{\alpha} p(x,\xi)\vert \sim O\left(\langle\xi\rangle^{m-|\alpha|}\right),\\
			&  \forall \alpha,\beta\in\mathbb{N}\cup\{0\}, \ \mbox{for some}\ m\in\mathbb{C}
		\end{aligned}
		\right \}
	\end{equation*}
	where $\displaystyle{\langle\xi\rangle:=\left(1+\Vert \xi\Vert^2_2\right)^{\frac{1}{2}}}$ and $O$ is the big-O function captures the order. The exponent $m$ is called the order of $\mathcal{P}($and $p)$. Note that the operator $\left((-\Delta)^{a}+q\right)$ is a $\psi$do operator of order 2a. Depending on the context, the operator $\mathcal{P}_+=r^{+}\mathcal{P}e^{+}$ denotes the truncation of $\mathcal{P}$ to $\mathbb{R}^{N}_{+}$ or to $\Omega$. We define the following $\mathrm{L}^2$ spaces:
	\begin{align*}
		&H^s\left(\mathbb{R}^N\right):= \left\{ u\in\mathcal{S}'\left(\mathbb{R}^N\right)|\mathcal{F}^{-1}\left(\langle\xi\rangle^s\hat{u}\right)\in \mathrm{L}^2\left(\mathbb{R}^N\right)\right\},
		\\
		&\dot{H}^s\left(\overline{\Omega}\right):= \left\{ u\in H^s\left(\mathbb{R}^N\right)| \ \mbox{supp}\ u\subset \overline{\Omega}\right\},
		\\
		&\overline{H}^s(\Omega):= \left\{ u\in \mathcal{D}'(\Omega)| u=r^{+}U \ \mbox{for a}\  U\in H^s\left(\mathbb{R}^N\right)\right\}.
	\end{align*}
	Next we define order reducing operator on $\mathbb{R}^N$: for $t\in\mathbb{R}$
	\[
	\Xi^{t}_{\pm}:= Op\left(\chi^t_{\pm}\right), \quad \chi^t_{\pm}:=\left( \langle \xi'\rangle\pm i\xi_N\right)^t \ \mbox{where} \ \xi=(\xi',\xi_N), \xi':=(\xi_1,\dots,\xi_{N-1}).
	\]
	These symbols extend analytically in $\xi_N$ to Im $\displaystyle{\xi_N\gtrless 0}$. Hence by Paley-Wiener theorem $\displaystyle{\Xi^{t}_{\pm}}$ preserves support in $\displaystyle{\mathbb{R}^N_{\pm}}$. Similarly one can defined $\displaystyle{\Lambda^t_{\pm}}$ adapted to the situation for a smooth domain. For details readers are refereed to \cite{hormander1963linear}, \cite{hormander1966seminar}, \cite{taylor2006pseudo}, \cite{taylor1991pseudodifferential} \cite{grubb2008distributions}. Here we mention some important homeomorphism results regarding this operators: for all $s>0$
	\begin{align*}
		& \Xi^{t}_{\pm}: H^s\left(\mathbb{R}^N\right) \xrightarrow{\sim} H^{s-t}\left(\mathbb{R}^N\right), \ \Lambda^{t}_{\pm}: H^s\left(\mathbb{R}^N\right) \xrightarrow{\sim} H^{s-t}\left(\mathbb{R}^N\right),\\
		& \Xi^{t}_{\pm}: \dot{H}^s\left( \overline{\mathbb{R}^{N}_{\pm}}\right) \xrightarrow{\sim}\dot{H}^{s-t}\left( \overline{\mathbb{R}^{N}_{\pm}}\right), \ r^{+}\Xi^{t}_{\pm} e^{+}: \overline{H}^s\left( {\mathbb{R}^{N}_{\pm}}\right) \xrightarrow{\sim}\overline{H}^{s-t}\left( {\mathbb{R}^{N}_{\pm}}\right),\\
		& \Lambda^{t}_{\pm}: \dot{H}^s\left( \overline{\Omega}\right) \xrightarrow{\sim}\dot{H}^{s-t}\left( \overline{\Omega}\right), \ r^{+}\Lambda^{t}_{\pm} e^{+}: \overline{H}^s\left( {\Omega}\right) \xrightarrow{\sim}\overline{H}^{s-t}\left( {\Omega}\right).
	\end{align*}
	We recall $\mu$-transmission spaces with Re $\mu>-1$ as introduced by H\"ormander \cite{hormander1963linear}, \cite{hormander1966seminar} and redefined in \cite{grubb2008distributions}:
	\begin{align*}
		H^{\mu(s)}\left(\overline{\mathbb{R}_{+}^N}\right):= &\Xi_{+}^{-\mu} e^{+}\overline{H}^{s-\mu}\left(\mathbb{R}_{+}^N\right), \quad s>\mu-\frac{1}{2}\\
		H^{\mu(s)}\left(\overline{\Omega}\right):= & \Lambda_{+}^{(-\mu)} e^{+} \overline{H}^{s-\mu}(\Omega), \ \ \quad s>\mu-\frac{1}{2}.
	\end{align*}
	The space $\displaystyle{{\mathcal{E}_{\mu}\left(\overline{\Omega}\right)}}$ is dense in $\displaystyle{H^{\mu(s)}\left(\overline{\Omega}\right)}$ i.e., [see \cite{hormander1963linear}, \cite{hormander1966seminar}, \cite{grubb2008distributions}]
	\[
	\cap_{s} H^{\mu(s)}\left(\overline{\Omega}\right) = \mathcal{E}_{\mu}\left(\overline{\Omega}\right).
	\]
	Also we have the following characterization [see \cite{grubb2008distributions}]:
	\begin{equation*}
		H^{\mu(s)}\left(\overline{\Omega}\right)
		\left\{
		\begin{aligned}
			&= \dot{H}^s \left(\overline{\Omega}\right) \ \ \ \mbox{if} \ |s-\mu|<\frac{1}{2},\\
			& \subset \dot{H}^s \left(\overline{\Omega}\right)+ e^{+}\tilde{d}^{\mu}\overline{H}^{s-\mu}(\Omega), \ \ \mbox{if}\ s>\mu+\frac{1}{2}, s-\mu-\frac{1}{2}\not \in \mathbb{N}.
		\end{aligned}
		\right .
	\end{equation*}
Some further characterization can be found in \cite{grubb2015fractional}, \cite{grubb2014local}. These conditions are stated in the lemma below:
\begin{Lem}\label{Further chracterization of mu transmission space}
	The following properties hold:
	\begin{itemize}
		\item[a:] For all $s>\mu-\frac 12$, $H^{\mu(s)}(\overline{\Omega})\subset \overline{H}^{\mu-\frac12}(\Omega)$ with continuous inclusion.
		\item[b:] 
		$H^{\mu(s)}(\overline{\Omega})=\dot{H}^{\mu+\frac 12-\epsilon}(\overline{\Omega})$, for all $s>\mu-\frac 12$ and $\epsilon>0$.
		\item[c:] $H^{\mu(s)}(\overline{\Omega})=\dot{H}^{s}(\overline{\Omega})$ if $\displaystyle{\mu\in\left( s-\frac12, s+\frac12\right)}$.
		\item[d:] Let, $q\in C_c^{\infty}(\Omega)$. The operator $\displaystyle{r^+\big((-\Delta)^s+q\big)}$ is a homeomorphism  from $H^{\mu(s)}(\overline{\Omega})$ onto $H^{\mu-2s}(\Omega)$.
		\item[e:] $\overline{H}^{s}(\Omega) \subset H^{\mu(s)}(\overline{\Omega}) \subset H^{s}_{loc}(\Omega)$ with continuous inclusions; i.e., multiplication by any $\chi\in C_c^{\infty}(\Omega)$ is bounded $H^{\mu(s)}(\overline{\Omega}) \to H^s(\Omega)$.
	\end{itemize}
\end{Lem}
	The domain of fractional  Laplacian with homogeneous Dirichlet boundary condition can be characterize as follows:
\begin{Lem}[\cite{claus2020realization}, \cite{grubb2018regularity}]\label{domain} 
	Let $\Omega\subset\mathbb{R}^N$ ($N\ge 1$) be a bounded open set with $C^{1,1}$ boundary and $a\in(0,1)$. Let $D((-\Delta)^a)$ be the domain of the operator $(-\Delta)^a$ with homogeneous Dirichlet boundary condition outside $\Omega$.  Then, the following assertions holds.
	\begin{itemize}
		\item[(a)] If $a\in(0,1/2)$, then $D((-\Delta)^s)= \dot{H}^{2a}(\Omega)$.
		\item[(b)] If $a=1/2$, then $D((-\Delta)^a)\subset H^{1-\varepsilon}(\Omega)\cap \dot{H}^a(\Omega)$ for all $\varepsilon\in(0,1)$.
		\item[(c)] If $a\in(1/2,1)$, then $D((-\Delta)^a)\subset H^{a+\frac 12}(\Omega)\cap \dot{H}^a(\Omega)$.
	\end{itemize}
\end{Lem}	
	We now state some  wellposedness and regularity results corresponding to \eqref{varying initial data} and \eqref{varying boundary data} in $\mu$-transmission spaces. These results are useful to obtain an integration by parts formula. Proof of these non-local parabolic wellposedness and regularity results can be found in \cite{grubb2023resolvents}, \cite{fernandez2016boundary}.
	\begin{Thm}\label{s-transmission regularity, initial data}
		Let $f\in \mathrm{L}^2(\Omega)$, then the unique solution $u_{f}$ to \eqref{varying initial data} has the following regularity:
		\[
		u_f\in C\left((0,T); H^{a(2a)}(\overline{\Omega})\right) \cap C^1((0,T); \mathrm{L}^2(\Omega))\cap C([0,T); \mathrm{L}^2(\Omega)).
		\]
	\end{Thm}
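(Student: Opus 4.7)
The strategy is to reinterpret \eqref{varying initial data} as an elliptic equation in the space variables for each fixed time and then invoke Grubb's $\mu$-transmission elliptic regularity slice-wise in $t$. Rewriting the PDE as
\[
(-\Delta)^a u_f(t,\cdot) = -\partial_t u_f(t,\cdot) - q(t,\cdot) u_f(t,\cdot)\ \ \mbox{in}\ \Omega, \qquad u_f(t,\cdot)\equiv 0\ \mbox{in}\ \mathbb{R}^N\setminus\Omega,
\]
it suffices to show that the right hand side lies in $\mathrm{L}^2((0,T)\times\Omega)$. Grubb's elliptic regularity in $\mu$-transmission spaces (see \cite{grubb2014local, grubb2015fractional}) then forces $u_f(t,\cdot)\in H^{a(2a)}(\overline{\Omega})$ for a.e. $t$, together with the a priori estimate
\[
\Vert u_f(t,\cdot)\Vert_{H^{a(2a)}(\overline{\Omega})} \leq C\bigl\Vert \partial_t u_f(t,\cdot)+q(t,\cdot)u_f(t,\cdot)\bigr\Vert_{\mathrm{L}^2(\Omega)}.
\]
Squaring and integrating in $t$ then gives the claim. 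Note that the $\mu$-transmission characterization requires $s-\mu-\frac12\notin\mathbb{N}$ and $s>\mu+\frac12$ (with $\mu=a$ and $s=2a$), and both conditions are guaranteed by the assumption $a\in(\frac{1}{2},1)$.

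Thus the task reduces to proving $\partial_t u_f\in \mathrm{L}^2((0,T);\mathrm{L}^2(\Omega))$. First I would treat the $q\equiv 0$ problem by the spectral expansion $u_f(t,x)=\sum_{n}e^{-t\lambda_n}(f,\phi_n)_{\mathrm{L}^2(\Omega)}\phi_n(x)$ from \eqref{eigen function equation}, so that $\partial_t u_f=-\sum_n \lambda_n e^{-t\lambda_n}(f,\phi_n)\phi_n$. Because $f\in C_c^{\infty}(\Omega)$ is compactly supported inside $\Omega$, iterating $(-\Delta)^a$ keeps $f$ in $\mathrm{L}^2(\Omega)$ so the spectral coefficients $(f,\phi_n)$ decay faster than any polynomial in $\lambda_n$, and Plancherel gives $\partial_t u_f\in \mathrm{L}^2((0,T);\mathrm{L}^2(\Omega))$. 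The lower order term $qu_f$ is handled by a Duhamel/perturbation argument: since $q\in C_c^{\infty}((0,T)\times\Omega)$ is bounded and $u_f\in \mathrm{L}^2((0,T);\mathrm{L}^2(\Omega))$ by the well-posedness results quoted above, we have $qu_f\in \mathrm{L}^2((0,T)\times\Omega)$ as well. Equivalently, one can write $u_f=e^{-t(-\Delta)^a}f-\int_0^t e^{-(t-s)(-\Delta)^a}(qu_f)(s)\,ds$ and observe that each piece inherits the required time regularity.

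Applying the slice-wise elliptic estimate with $g(t,\cdot):=-\partial_t u_f(t,\cdot)-q(t,\cdot)u_f(t,\cdot)\in \mathrm{L}^2((0,T)\times\Omega)$ then yields $u_f\in \mathrm{L}^2\bigl((0,T);H^{a(2a)}(\overline{\Omega})\bigr)$, as desired. The delicate step, rather than a genuine obstacle, is the justification that $\partial_t u_f(t,\cdot)$ remains in $\mathrm{L}^2(\Omega)$ up to $t=0$: one needs $((-\Delta)^a+q)f\in \mathrm{L}^2(\Omega)$, which holds because $f\in C_c^{\infty}(\Omega)$ makes $(-\Delta)^a f$ a smooth function on $\Omega$ that is globally bounded on $\mathbb{R}^N$ with polynomial decay, while $qf\in C_c^{\infty}$. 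Standard energy estimates for the parabolic problem then propagate this control to positive times. All of these ingredients are already developed in \cite{grubb2023resolvents} and \cite{fernandez2016boundary}.
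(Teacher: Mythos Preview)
Your approach is correct in outline but differs from the paper's. The paper takes a shorter route: set $v=u_f-f$, so that $v$ solves the same equation with zero initial data and forcing term $(-\Delta)^a f+qf\in \mathrm{L}^2((0,T)\times\Omega)$; then Grubb's \emph{parabolic} result \cite{grubb2023resolvents} directly yields $v\in \mathrm{L}^2\bigl((0,T);H^{a(2a)}(\overline\Omega)\bigr)$, and since $f\in C_c^\infty(\Omega)\subset\mathcal{E}_a(\overline\Omega)\subset H^{a(2a)}(\overline\Omega)$ the conclusion for $u_f=v+f$ follows. No separate time-regularity argument is needed because the parabolic theorem already packages it. Your slice-wise elliptic strategy instead unpacks this: you first establish $\partial_t u_f\in \mathrm{L}^2((0,T)\times\Omega)$ by hand and then feed the right-hand side into the elliptic $\mu$-transmission estimate. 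Both routes are legitimate; the paper's is more economical, while yours only invokes the elliptic (rather than parabolic) transmission theory plus standard maximal regularity.

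One caveat on your justification: the claim that ``iterating $(-\Delta)^a$ keeps $f$ in $\mathrm{L}^2(\Omega)$'' so that the spectral coefficients decay faster than any polynomial is not correct. After one application, $g:=r^+(-\Delta)^a f$ is smooth on $\overline\Omega$ but generically does not vanish on $\partial\Omega$; extending $g$ by zero and applying $(-\Delta)^a$ again produces a singularity of order $d^{-2a}$ near $\partial\Omega$, which for $a>\tfrac12$ is not in $\mathrm{L}^2(\Omega)$. Fortunately you do not need any iteration: $f\in C_c^\infty(\Omega)$ already lies in the operator domain $D((-\Delta)^a_D)$, so $\sum_n\lambda_n^2|(f,\phi_n)|^2=\Vert r^+(-\Delta)^a f\Vert_{\mathrm{L}^2(\Omega)}^2<\infty$, which is more than enough for $\partial_t u_f\in \mathrm{L}^2((0,T);\mathrm{L}^2(\Omega))$. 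With this correction your argument goes through. Note also that the elliptic isomorphism $(-\Delta)^a:H^{a(2a)}(\overline\Omega)\to \overline H^0(\Omega)$ is valid for all $a\in(0,1)$; the restriction $a>\tfrac12$ is not actually needed for this theorem (it enters elsewhere in the paper, through the integration-by-parts formula \eqref{integration by parts}).
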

It follows from semigroup theory \cite{pazy2012semigroups}, \cite{brezis2011functional}.
	\begin{Thm}[\cite{grubb2023resolvents}]\label{s-transmission regularity, boundary data}
		Let $F\in \mathrm{L}^2\left((0,T); H^{a+\frac{1}{2}}(\partial\Omega)\right)\cap \overline{H}^1\left((0,T); H^{\epsilon}(\partial\Omega)\right)$, for some $\epsilon>0$ and let $F(0,\cdot)=0$. Then the solution $u_F$ to \eqref{varying boundary data} has the following regularity:
		\[
		u_F\in \mathrm{L}^2\left((0,T); H^{(a-1)(2a)}(\overline{\Omega})\right) \cap \overline{H}^1\left((0,T); \mathrm{L}^2(\Omega)\right).
		\]
	\end{Thm}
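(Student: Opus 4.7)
Since this theorem is attributed to Grubb's resolvent work, the plan is to reduce the problem to an elliptic $\mu$-transmission lift together with a parabolic estimate for a problem with zero singular trace, using the framework recalled in the preceding subsections. Concretely, I would write $u_F = v + w$, where $w$ absorbs the singular behaviour prescribed by $F$ through a Poisson-type elliptic operator, and $v$ solves an initial--boundary value problem with homogeneous singular trace but with a forcing term coming from $\partial_t w$.

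For the lift, I would invoke Grubb's elliptic Poisson operator for the $\mu$-transmission problem with $\mu = a-1$: for each fixed $t\in(0,T)$, let $w(t,\cdot)$ solve
\begin{equation*}
\left\{
\begin{aligned}
((-\Delta)^a+q)w(t,\cdot) &= 0 \qquad &&\mbox{in } \Omega,\\
w(t,\cdot) &= 0 &&\mbox{in } \mathbb{R}^N\setminus\overline{\Omega},\\
\lim_{x\to y,\ y\in\partial\Omega}\frac{w(t,x)}{d^{a-1}(x)} &= F(t,y) &&\mbox{on }\partial\Omega.
\end{aligned}
\right.
\end{equation*}
The mapping property $K:H^{a+\frac{1}{2}}(\partial\Omega)\to H^{(a-1)(2a)}(\overline{\Omega})$ in Grubb's $\mu$-transmission calculus yields a spatial bound for $w(t,\cdot)$ controlled by $\Vert F(t,\cdot)\Vert_{H^{a+\frac{1}{2}}(\partial\Omega)}$, and squaring and integrating in $t$ gives $w\in\mathrm{L}^2((0,T);H^{(a-1)(2a)}(\overline{\Omega}))$. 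Exploiting $F\in\overline{H}^1((0,T);H^{\epsilon}(\partial\Omega))$ with $F(0,\cdot)=0$, the same elliptic solver applied to $\partial_t F(t,\cdot)$, combined with an interpolation argument trading spatial for temporal regularity, produces $\partial_t w\in\mathrm{L}^2((0,T)\times\Omega)$.

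Setting $v := u_F - w$ and using uniqueness in the parabolic problem, one checks that $v$ satisfies
\begin{equation*}
\left\{
\begin{aligned}
\partial_t v + (-\Delta)^a v + qv &= -\partial_t w \qquad &&\mbox{in } (0,T)\times\Omega,\\
v &= 0 &&\mbox{in } (0,T)\times\mathbb{R}^N\setminus\overline{\Omega},\\
\lim_{x\to y,\ y\in\partial\Omega}\frac{v}{d^{a-1}} &= 0 &&\mbox{on } (0,T)\times\partial\Omega,\\
v(0,\cdot) &= 0 &&\mbox{in } \Omega.
\end{aligned}
\right.
\end{equation*}
The vanishing of the $d^{a-1}$-trace on $\partial\Omega$ places $v$ in the $a$-transmission scale, so this is a homogeneous parabolic problem of exactly the type treated in Theorem \ref{s-transmission regularity, initial data}, only with zero initial datum and a genuine $\mathrm{L}^2((0,T)\times\Omega)$ forcing. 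Standard $a$-transmission energy estimates combined with Duhamel then give $v\in \mathrm{L}^2((0,T);H^{a(2a)}(\overline{\Omega}))\cap\overline{H}^1((0,T);\mathrm{L}^2(\Omega))$, and since the $\mu$-transmission hierarchy satisfies the inclusion $H^{a(2a)}(\overline{\Omega})\subset H^{(a-1)(2a)}(\overline{\Omega})$, the sum $u_F = v+w$ has the claimed regularity.

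The main obstacle is the lift in the first step: the elliptic Poisson operator for the singular trace condition $u/d^{a-1}|_{\partial\Omega}=F$ must be shown to be continuous from $H^{a+\frac{1}{2}}(\partial\Omega)$ into $H^{(a-1)(2a)}(\overline{\Omega})$, and the time-derivative estimate for $\partial_t w$ must actually land in $\mathrm{L}^2((0,T)\times\Omega)$. This is exactly where the hypothesis $F(0,\cdot)=0$ becomes indispensable: without it, the instantaneous turn-on of the singular trace would produce a non-integrable temporal boundary layer in $\partial_t w$ at $t=0$, and the energy identity for $v$ would collapse. Once these elliptic ingredients from Grubb's $\mu$-transmission calculus are in hand, the second step reduces to standard semigroup estimates for the homogeneous problem already covered by Theorem \ref{s-transmission regularity, initial data}.
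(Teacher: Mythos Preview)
The paper does not prove this theorem at all: it is stated with the attribution \cite{grubb2023resolvents} and no argument is given, so there is no ``paper's own proof'' to compare against. Your sketch---an elliptic $\mu$-transmission Poisson lift absorbing the singular trace, followed by a homogeneous-trace parabolic problem with $\mathrm{L}^2$ forcing handled by semigroup/Duhamel estimates---is precisely the standard strategy in Grubb's framework, and is a reasonable outline of how the cited result is obtained. The points you flag as obstacles (continuity of the Poisson operator $K:H^{a+\frac12}(\partial\Omega)\to H^{(a-1)(2a)}(\overline{\Omega})$ and the role of $F(0,\cdot)=0$ in controlling $\partial_t w$) are indeed the substantive ingredients, and they are supplied by Grubb's $\mu$-transmission calculus rather than by anything in the present paper.
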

	 The theorem \ref{s-transmission regularity, initial data} is also true for the solution of the adjoint system corresponding to \eqref{varying initial data}, given by:
	\begin{equation}\label{adjoint system initial data}
		\left \{
		\begin{aligned}
			-\partial_t u_{f^*}+(-\Delta)^a u_{f^*}+ qu_{f^*}=&0 \qquad \qquad  \mbox{in}\ (0,T)\times\Omega\\
			u_{f^*}=&0 \qquad \qquad  \mbox{in} \ (0,T)\times\mathbb{R}^N\setminus\Omega \\
			u_{f^*}(T,\cdot)=&f^* \qquad \qquad  \mbox{in} \ \Omega,
		\end{aligned}
		\right .
	\end{equation}
	where $f^{*}\in\mathrm{L}^2(\Omega)$. It just comes from change of variable $t\to T-t$. We like to emphasize that Pohozaev identity \eqref{Phozaev identity} holds for $u_{f^*}$ also. Next we move onto defining traces of the transmission spaces.
	\subsection{Boundary values of $\mu$ transmission spaces, Dirichlet trace}
	The definitions and notations used in this sub-section are well explained in \cite{grubb2015fractional}. The author in \cite{grubb2015fractional}, explained that the $\mu$ transmission spaces $H^{\mu(s)}$ admits a special definition of $\mu$ boundary values, which characterize how the elements of the $\mu$ transmission spaces decays towards the boundary. Let $M$ be a positive integer. We recall the space $\mathcal{E}_{\mu}$ as defined in \eqref{density in transmission space}. Let us introduce the natural canonical mapping 
	\begin{align}\label{canonical map}
		\rho_{\mu,M}: \mathcal{E}_{\mu}\to \mathcal{E}_{\mu}/\mathcal{E}_{\mu+M},
	\end{align}
	where one can represent the space $\displaystyle{\frac{\mathcal{E}_{\mu}}{\mathcal{E}_{\mu+M}}}$ as the space of sections of a trivial bundle and introduce norms in it. Let us choose a smooth function $\tilde{d}$ on $\overline{\Omega}$ which is equal to the distance from the boundary sufficiently close to the boundary and remains positive in $\Omega$. We set
	\[
	I^{\mu}(x):= \frac{\tilde{d}^{\mu}(x)}{\Gamma(\mu+1)} \ \mbox{in}\ \overline{\Omega} \ \ \mbox{and}\ I^{\mu}\equiv 0 \ \mbox{in} \partial\Omega,
	\]
	when Re$(\mu)$>-1. This definition can be uniquely extended modulo $C_0^{\infty}(\Omega)$ to arbitrary values of $\mu$ so that $\partial_N I^{\mu}=I^{\mu-1}$, where $\partial_N$ is the derivative with respect to the $x_N$ variable. Note that fact that the line $x_N$ is perpendicular to the boundary $\partial\Omega$. Hence, it can be viewed as the differentiation along the geodesics perpendicular to the boundary. Our definition on $\mathcal{E}_{\mu}$ \eqref{density in transmission space}, clearly indicates that every class in $\displaystyle{\mathcal{E}_{\mu}/\mathcal{E}_{\mu+1}}$ contains an element of the form $I^{\mu}f$ where $f\in C^{\infty}(\overline{\Omega)}$. Furthermore, in the quotient space that element is equivalent to `$0$' if and only if $f=0$ on the boundary $\partial\mathbb{R}_+^N$. Repeated application of the previous fact yields the following representation of $u\in\mathcal{E}_{\mu}$:
	\[
	u= u_0 I^{\mu}+u_1I^{\mu+1}+\cdots+u_{M-1}I^{\mu+M-1}+v,
	\] 
	where the functions $u_i\in C^{\infty}(\overline{\Omega})$ and constant along the normal closed to the boundary $\partial\Omega$. So, the traces of $u$ denoted by the following notation
	\[
	\gamma_{\mu,j}u= u_j|_{\partial\Omega}.
	\]
	Note that the fact that closed to the boundary, $I^{\mu}$ is equal to $\displaystyle{\frac{d^{\mu}}{\Gamma(\mu+1)}}$. Furthermore, note that:
	\begin{align*}
		\gamma_{\mu,j}u=& \gamma_{\mu+j,0}u, \quad \mbox{when} \ u\in\mathcal{E}_{\mu+j},\\
		\gamma_{\mu,0}u=& \Gamma(\mu+1)\gamma_0 d^{-\mu}u, \quad \mbox{when} \ u\in\mathcal{E}_{\mu}, \ \mbox{with} \ Re(\mu)>-1.
	\end{align*}
	where $\gamma_0$ is the projection of a function on the boundary $\partial\Omega$. We can extend the boundary trace for the functions in $H^{\mu(s)}(\overline{\Omega})$. For a function
	$u\in H^{\mu(s)}(\overline{\Omega})$, the boundary term $\frac{u}{d^{\mu}}$ is well defined and lies in the space $H^{s-\mu-\frac12}(\partial\Omega)$ [see \cite{grubb2017fractional}, \cite{grubb2018green}]. The boundary trace $\frac{u}{d^{\mu}}$ for the function $u\in H^{\mu(s)}(\overline{\Omega})$, we call it as \emph{Dirichlet trace}. The following result can be found in \cite{grubb2015fractional}:
	\begin{Thm}[\cite{grubb2015fractional}]\label{dirichlet kernel in mu-transmission space}
		Let $s>$Re$(\mu)+M-\frac{1}{2}$. The canonical map $\rho_{\mu,M}$ in \eqref{canonical map} can be extended by continuity (still denoted by $\rho_{\mu,M}$):
		\[
		\rho_{\mu,M}: H^{\mu(s)}(\overline{\Omega})\to \prod_{0\leq j<M} H^{s-Re(\mu)-j-\frac12}(\partial\Omega).
		\]
		Furthermore, the map is surjective and with kernel $H^{(\mu+M)(s)}(\overline{\Omega})$. In other words, the canonical map $\rho_{\mu,M}$ defines a homeomorphism of $\displaystyle{H^{\mu(s)}(\overline{\Omega})/H^{(\mu+M)(s)}(\overline{\Omega})}$ onto $\displaystyle{\prod_{0\leq j<M} H^{s-Re(\mu)-j-\frac12}(\partial\Omega)}$.
	\end{Thm}
	The theorem implies that if a function $u\in H^{\mu(s)}(\overline{\Omega})$ has zero Dirichlet trace, with $s>Re(\mu)+M-\frac12$, then $u\in H^{(\mu+1)(s)}(\overline{\Omega})$.  Similarly, we can define the Neumann trace also for the transmission spaces $H^{(a-1)(2a)}(\overline{\Omega})$. 
	\newline
	With all these regularity results in hand we are going to state integration by parts formula: If $\displaystyle{u,v\in H^{(a-1)(s)}(\overline{\Omega})}$ with $\displaystyle{s>a+\frac{1}{2}}$, then
	\[
	\int_{\Omega}v(-\Delta)^a u -u(-\Delta)^a v = \Gamma(a)\Gamma(a+1) \int_{\partial\Omega} \partial_{\nu} \left(\frac{u}{d^{a-1}}\right) \frac{v}{d^{a-1}}- \partial_{\nu} \left(\frac{v}{d^{a-1}}\right) \frac{u}{d^{a-1}},
	\]
	where $\displaystyle{\partial_{\nu}}$ is the normal derivative at the boundary. Furthermore, if $u\in H^{(a-1)(s)}(\overline{\Omega})$ and $v\in H^{a(s)}(\overline{\Omega})$, then
	\begin{align}\label{integration by parts}
		\int_{\Omega}v(-\Delta)^a u -u(-\Delta)^a v = -\Gamma(a)\Gamma(a+1) \int_{\Omega} \frac{u}{d^{a-1}}\frac{v}{d^a}.
	\end{align}
	Here the boundary contributions are completely local. The formula was first proved for fractional Laplacian in \cite{abatangelo2013large} and later generalized for any classical pseudodifferential operator of order $2a$ in \cite{grubb2016integration}, \cite{Grubb2020ExactGF}. In articles \cite{grubb2016integration}, \cite{Grubb2020ExactGF}, author shows that the boundary terms $\displaystyle{\partial_{\nu}\left(\frac{u}{d^{a-1}} \right), \partial_{\nu}\left(\frac{v}{d^{a-1}} \right)\in H^{s-a-\frac{1}{2}}(\partial\Omega)}$  (in case of \eqref{integration by parts} $\displaystyle{\frac{u}{d^{a}}  \in H^{s-a-\frac{1}{2}}(\partial\Omega)}$ $)$. Hence if $s>a+\frac12$, then the $\mathrm{L}^2$ integral at the boundary makes sense.  We would like to note that for any $a\in(\frac12,1)$ and for any $t>0$, the integration by parts formula holds for the functions $u_f$ and $u_F$, which are the solutions to \eqref{varying initial data} and \eqref{varying boundary data}, respectively, where $f\in\mathrm{L}^2(\Omega)$ and $F\in\mathrm{L}^2((0,T)\times\partial\Omega)$. thanks to theorem \ref{s-transmission regularity, initial data} and theorem \ref{s-transmission regularity, boundary data}. 
	\newline
	We also mention a particular integration by parts formula which we will use throughout the article. Let $a\in(0,1)$ and $w_1,w_2\in H^{a(2a)}(\overline{\Omega})\cap H^a(\mathbb{R}^N)$ with $w_1\equiv w_2\equiv 0$ in $\displaystyle{\mathbb{R}^N\setminus\Omega}$, then
	\begin{align}\label{integration by parts Hs}
		\int_{\Omega} w_2 \, (-\Delta)^aw_1  \, \rm{d}x= \int_{\Omega} w_1 \, (-\Delta)^a w_2  \, \rm{d}x.
	\end{align}
	This can be established easily using linear translation.
	\newline
	Next, we proceed to establish our density results. We present two different proofs for each of these results, addressing both qualitative and quantitative aspects.

	\section{Density Results:}
This section we start with the density of the solution space corresponding to \eqref{varying initial data}. 

\vspace{.2cm}
	
\textbf{Proof of theorem \ref{Density result initial data}:}
	\begin{proof}
		We will provide two different proofs. One using integration by parts \eqref{integration by parts} and  another using a variational approach.
		\newline
		First proof: We select the time slice $t=\Tilde{T}$. Let the set $\displaystyle{\left\{u_f(\Tilde{T},\cdot), f\in C_c^{\infty}(\Omega)\right\}}$ is not dense in $\mathrm{L}^2(\Omega)$. Then by Hahn-Banach theorem there exists $f^*\in\mathrm{L}^2(\Omega)$ such that $\Vert f^*\Vert_{L^2(\Omega)}=1$ and 
		\begin{align}\label{inner product zero, initial data}
			\langle u_f(\Tilde{T},\cdot), f^* \rangle=0, \qquad \forall f\in C_c^{\infty}(\Omega).  
		\end{align}
		Let $u_{f^*}$ be the solution of the adjoint system \eqref{adjoint system initial data} with $f^*$ as initial data. Thanks to Theorem \ref{s-transmission regularity, initial data}, we have that
		\[
			u_{f^*}\in C\left((0,\tilde{T}); H^{a(2a)}(\overline{\Omega})\right) \cap C^1((0,\tilde{T}); \mathrm{L}^2(\Omega))\cap C([0,\tilde{T}]; \mathrm{L}^2(\Omega)).
		\]
		 We use the integration by parts formula \eqref{integration by parts Hs}
		\[
		\int_{\Omega} u_f(-\Delta)^a u_{f^*}=\int_{\Omega}u_{f^*}(-\Delta)^au_f 
		\]
		implies
		\[
		\int_{\Omega} \partial_t u_{f^*}u_f +u_{f^*}\partial_t u_f =0= \int_{\Omega}\partial_{t}\left(u_{f^*}u_f\right).
		\] 
		Integrating with respect to $t$ yields: 
		\[
		\int_{\Omega} u_{f^*}(\Tilde{T}-\epsilon,\cdot)u_f(\Tilde{T}-\epsilon,\cdot)=\int_{\Omega} u_{f^*}(0,\cdot)u_f(0,\cdot), \quad \forall\, \epsilon>0.
		\]
		Thanks to continuity with respect to time variable (Theorem \ref{s-transmission regularity, initial data}) and \eqref{inner product zero, initial data}, we deduce that
		\[
		u_{f^*}(0,\cdot)\equiv 0.
		\] 
		Since $f\in C_c^{\infty}(\Omega)$ is arbitrary. Furthermore, the solution can be represented in the following form [see \cite{chan2022singular}]:
		\[
		u_{f^*}(t,x)= \sum\limits_{n\in\mathbb{N}} e^{-\lambda_n(\Tilde{T}-t)} \phi_n(x)\langle \phi_n, f^{*}\rangle
		\]
		where for all $n\in\mathbb{N}$, $\phi_n$'s are the normalized eigenfunctions corresponding to \eqref{eigen function equation}. We have that $\lambda_1<\lambda_2\leq \dots\lambda_n\leq\dots+\infty$ and $\{\phi_n\}$ is an orthonormal basis of $\mathrm{L}^2(\Omega)$. Hence $u_{f^*}(0,x)\equiv0$ implies 
		\[
		e^{-\Tilde{T}\lambda_n}\langle \phi_n, f^{*}\rangle=0, \qquad \forall \ n\in\mathbb{N}.
		\]   
		This further implies $f^{*}\equiv0$, contradicts $\Vert f^{*}\Vert_{\mathrm{L}^2(\Omega)}=1$. Hence, for a given $\Tilde{T}>0$, the set $\{u_f(\Tilde{T},\cdot): f\in C_c^{\infty}(\Omega)\}$ is dense in $\mathrm{L}^2(\Omega)$.
		\newline
		Second proof: We now provide a quantitative proof based on variational approach. We prove that the solution set $\{u_f(\Tilde{T},\cdot): f\in C_c^{\infty}(\Omega)\}$ is dense in $\mathrm{L}^2(\Omega)$. 
		\newline
		Consider the functional: 
		\[
		\mathcal{J}_{\epsilon,a} (f^*) =\frac{1}{2} \int_{\Omega} \vert  u_{f^*}(0,\cdot)\vert^2 \, \rm{d}x+\epsilon \Vert f^{*}\Vert_{\mathrm{L}^2(\Omega)}-\int_{\Omega} h f^* \, \rm{d}x,
		\]
		where $\epsilon>0$ is a fix quantity and $h\in\mathrm{L}^2(\Omega)$ is a given function. Furthermore $u_{f^*}$ and $f^*$ are connected by \eqref{adjoint system initial data} with initial data $f^*$ at $T=\Tilde{T}$. We intend to show that the functional $\displaystyle{\mathcal{J}_{\epsilon,a}: \mathrm{L}^2(\Omega)\to \mathbb{R}\cup\{+\infty\}}$ is lower semicontinuous, strictly convex and coercive. 
		\begin{itemize}
			\item[(a).] Continuity: Let $f_n^{*}\to f^{*}$ in $\mathrm{L}^2(\Omega)$. The solution to \eqref{adjoint system initial data}, $u_{f^*_m}$ corresponding to the initial data $f_m^*$ can be expressed as
			\[
			u_{f^*_m}(t,\cdot)= \sum\limits_{n\in\mathbb{N}} e^{-\lambda_n(\Tilde{T}-t)} \phi_n(x)\langle \phi_n, f_m^*\rangle,
			\]
			where $\phi_n$ are the normalized eigenfunctions of \eqref{eigen function equation}. Hence 
			\[
			\Vert u_{f_m^*}(0,\cdot)-u_{f^*}(0,\cdot)\Vert^2_{\mathrm{L}^2(\Omega)}= \sum\limits_{n\in\mathbb{N}} e^{-2\Tilde{T}\lambda_n} | \langle \phi_n, f_m^*\rangle -\langle \phi_n, f^*\rangle|^2.  
			\]
			From the fact that $f_n^*\to f^*$ in $\mathrm{L}^2(\Omega)$, we have that 
			\[
			\displaystyle{\sum\limits_{m\in\mathbb{N}}| \langle \phi_n, f_m^*\rangle -\langle \phi_n, f^*\rangle|^2 \to 0}.
			\]
			Hence 
			\[
			\Vert u_{f_m^*}(0,\cdot)-u_{f^*}(0,\cdot)\Vert^2_{\mathrm{L}^2(\Omega)} \leq e^{-2\lambda_1\Tilde{T}}\sum\limits_{n\in\mathbb{N}}| \langle \phi_n, f_m^*\rangle -\langle \phi_n, f*\rangle|^2 \to 0. 
			\]
			This proves continuity of the functional $\mathcal{J}_{\epsilon,a}$.
			\item[(b).] Strictly Convexity: It follows from the strict convexity of the the square of the $\mathrm{L}^2$-norm.
			\item[(c).] Coercivity: Let $f_m^*\in\mathrm{L}^2(\Omega)$, for all $m\in\mathbb{N}$ and $\displaystyle{\Vert f_m^*\Vert_{\mathrm{L}^2(\Omega)}\to +\infty}$ as $m\to+\infty$. Consider the functions $\displaystyle{g_m^*= \frac{f_m^*}{\Vert f_m^*\Vert_{\mathrm{L}^2(\Omega)}}}$. Then $\displaystyle{\Vert g_m^*\Vert_{\mathrm{L}^2(\Omega)}=1}$ for all $m\in\mathbb{N}$. Let $u_{g^*_m}$ be the solution to \eqref{adjoint system initial data} with respect to the initial data $g_m^*$. Then $\displaystyle{u_{g^*_m}=\frac{u_{f^*_m}}{\Vert f_m^*\Vert_{\mathrm{L}^2(\Omega)}}}$. We divide the functional $\mathcal{J}_{\epsilon,a}$ by $\Vert f_m^*\Vert_{\mathrm{L}^2(\Omega)}$. It yields
			\[
			\frac{\mathcal{J}_{\epsilon,a}(f_m^*)}{\Vert f_m^*\Vert_{\mathrm{L}^2(\Omega)}}= \frac{1}{2} \Vert f_m^*\Vert_{\mathrm{L}^2(\Omega)} \int_{\Omega}  \vert u_{g_m^*}(0,\cdot)\vert^2 \, \rm{d}x+\epsilon-\int_{\Omega} hg_m^* \, \rm{d}x.
			\]
			There can be two different scenarios here. If $\displaystyle{\liminf\limits_{m\in\mathbb{N}}\int_{\Omega}  \vert u_{g_m^*}(0,\cdot)\vert^2>0}$, then 
			$\displaystyle{	\frac{\mathcal{J}_{\epsilon,a}(f_m^*)}{\Vert f_m^*\Vert_{\mathrm{L}^2(\Omega)}}}\to +\infty$ as $m\to+\infty$. The other case when there exists a subsequence (still indexed by $m$) such that $\displaystyle{\liminf\limits_{n\in\mathbb{N}}\int_{\Omega}  \vert u_{g_m^*}(0,\cdot)\vert^2=0}$. From the compactness of $\mathrm{L}^2$ space, we have that $g_m^* \rightharpoonup g^*$ weakly in $\mathrm{L}^2(\Omega)$. The functions $u_{g_m^*}$ can be expressed as 
			\[
			u_{g_m^*}(0,\cdot)= \sum\limits_{n\in\mathbb{N}} e^{-\Tilde{T}\lambda_n} \phi_n \langle \phi_n, g_m^*\rangle,
			\]
			where $\phi_n$'s are the eigenfunctions corresponding to \eqref{eigen function equation}. Let $u_{g^*}$ be the solution to \eqref{varying initial data} with initial data $g^*$. 
			\[
			u_{g^*}(0,\cdot)= \sum\limits_{n\in\mathbb{N}} e^{-\Tilde{T}\lambda_n} \phi_n \langle \phi_n, g^*\rangle.
			\]
			The lower semicontinuity of $\mathrm{L}^2$ norm implies $\displaystyle{\Vert g^*\Vert_{\mathrm{L}^2(\Omega)}\leq \Vert g^*_m\Vert_{\mathrm{L}^2(\Omega)}\leq 1}$. 
			We will show $u_{g^*_m}(0,\cdot)\to u_{g^*}(0,\cdot)$ converges in $\mathrm{L}^2(\Omega)$ strongly. 
			\begin{align*}
				\Vert u_{g_m^*}(0,\cdot)-u_{g^*}(0,\cdot)\Vert^2_{\mathrm{L}^2(\Omega)} =& \sum\limits_{n\in\mathbb{N}}e^{-2\Tilde{T}\lambda_n} |\langle \phi_n, g^*_m\rangle-\langle \phi_n, g^*\rangle|^2\\
				\leq & \sum\limits_{n=1}^{n_K}e^{-2\Tilde{T}\lambda_n} |\langle \phi_n, g^*_m\rangle-\langle \phi_n, g^*\rangle|^2 \\
				+& 2 \sum\limits_{n=n_K+1}^{\infty}e^{-2\Tilde{T}\lambda_n} + 2 \sum\limits_{n=n_K+1}^{\infty}e^{-2\Tilde{T}\lambda_n}\\
				\leq & \sum\limits_{n=1}^{n_K}e^{-2\Tilde{T}\lambda_n} |\langle \phi_n, g^*_m\rangle-\langle \phi_n, g^*\rangle|^2 \\
				+& 4 C_1(a) \sum\limits_{n=n_K+1}^{\infty} \frac{1}{|\lambda_k|^{\frac{N}{s}}}\\
				\leq & \sum\limits_{n=1}^{n_K}e^{-2\Tilde{T}\lambda_n} |\langle \phi_n, g^*_m\rangle-\langle \phi_n, g^*\rangle|^2 \\
				+& 4C_1(a)C_2(\Omega,N,a) \sum\limits_{n=n_K+1}^{\infty} \frac{1}{k^2}.
			\end{align*}
			The last inequality follows from one sided Weyl's law [see \cite{chan2022singular}, \cite{blumenthal1959asymptotic}]. Here $C_1(a)$, $C_2(\Omega,N,a)$ are two positive constant depends on the domain, the dimension and the fractional power `$a$'. We choose $n_K\in\mathbb{N}$  such that $\displaystyle{\sum\limits_{n=n_K+1}^{\infty} \frac{1}{k^2}\leq \delta}$ for a given $\delta>0$. Hence for a given $\delta>0$, there exists large $m_K\in\mathbb{N}$ such that for all $m\geq m_{K}$:
			\[
			\Vert u_{g_m^*}(0,\cdot)-u_{g^*}(0,\cdot)\Vert^2_{\mathrm{L}^2(\Omega)} \leq 2\delta.
			\]
			Hence we deduce $\displaystyle{u_{g_m^*}(0,\cdot)\to u_{g^*}(0,\cdot)}$ strongly in $\mathrm{L}^2(\Omega)$, which further implies
			\[
			u_{g^*}\equiv 0.
			\]
			Hence $\displaystyle{e^{-\Tilde{T}\lambda_n}\langle \phi_n, g^*\rangle=0}$ i.e., $\displaystyle{\langle \phi_n, g^*\rangle=0}$ for all $n\in\mathbb{N}$. It yields 
			\[
			g^*\equiv 0.
			\]
			As $g_n^*$ converges to $g^*$ weakly we have $\displaystyle{\int_{\Omega} hg_n^* \to 0}$. Hence $\displaystyle{	\frac{\mathcal{J}_{\epsilon,a}(f_n^*)}{\Vert f_n^*\Vert_{\mathrm{L}^2(\Omega)}}>\frac{\epsilon}{2}}$ for large $m$. Hence we deduce $\displaystyle{\mathcal{J}_{\epsilon,a}(f_n^*)\to +\infty}$ as $\displaystyle{\Vert f_n^*\Vert_{\mathrm{L}^2(\Omega)}\to+\infty}$. Hence $\mathcal{J}_{\epsilon,a}$ is coercive.
		\end{itemize}
		Continuity, strict convexity and coercivity helps us conclude  that $\mathcal{J}_{\epsilon,a}$ attains it's minimum for some $f_{min}^*\in \mathrm{L}^2(\Omega)$. We fix a function $g^{*}\in\mathrm{L}^2(\Omega)$ and let $u_{g^*}$ be the solution to \eqref{adjoint system initial data} corresponding to the initial data $g^{*}$. We have
		\[
		\mathcal{J}_{\epsilon,a} (f_{min}^*+\alpha g^*)-\mathcal{J}_{\epsilon,a} (f_{min}^*) \geq 0, \quad \forall \, \alpha\in\mathbb{R}.
		\]
		For $\alpha\geq0$, it yields
		\begin{align*}
			\frac{1}{2} \int_{\Omega} 2\alpha u_{f^*_{min}}(0,\cdot)&u_{g^*}(0,\cdot) \, \rm{d}x + \frac{1}{2} \int_{\Omega} \alpha^2 (u_{g^*}(0,\cdot))^2 \, \rm{d}x \\
			+&\epsilon\frac{\alpha^2\Vert g^*\Vert^2_{\mathrm{L}^2(\Omega)}+2\alpha \int_{\Omega} f^*_{min}g^* \, \rm{d}x}{\Vert f^*+\alpha g^*\Vert_{\mathrm{L}^2(\Omega)}+\Vert f_{min}^*\Vert_{\mathrm{L}^2(\Omega)}}-\alpha \int_{\Omega} h g^* \, \rm{d}x \geq 0. 
		\end{align*}
		We divide the whole expression by $\alpha$ and let $\alpha\to0\uparrow$. It yields 
		\[
		\int_{\Omega} u_{f^*_{min}}(0,\cdot)u_{g^*}(0,\cdot) \, \rm{d}x + \epsilon \frac{\int_{\Omega} f^*_{min}g^* \, \rm{d}x}{\Vert f_{min}^*\Vert_{\mathrm{L}^2(\Omega)}}- \int_{\Omega} h g^* \, \rm{d}x \geq 0.
		\]
		Employing Cauchy-Schwartz inequality, we obtain 
		\begin{align}\label{Minimizer inequality 1}
			\int_{\Omega} u_{f^*_{min}}(0,\cdot)u_{g^*}(0,\cdot) \, \rm{d}x - \int_{\Omega} h g^* \, \rm{d}x \geq -\epsilon \Vert g^*\Vert_{\mathrm{L}^2(\Omega)}.
		\end{align}
		Similarly for $\alpha\leq 0$, we obtain 
		\begin{align}\label{Minimizer inequality 2}
			\int_{\Omega} u_{f^*_{min}}(0,\cdot)u_{g^*}(0,\cdot) \, \rm{d}x - \int_{\Omega} h g^* \, \rm{d}x \leq \epsilon \Vert g^*(0,\cdot)\Vert_{\mathrm{L}^2(\Omega)}.
		\end{align}
		Together, we obtain
		\begin{align} \label{minimizer weak formulation}
			\left| \int_{\Omega} u_{f^*_{min}}(0,\cdot)u_{g^*}(0,\cdot) \, \rm{d}x - \int_{\Omega} h g^* \, \rm{d}x \right| \leq \epsilon \Vert g^*\Vert_{\mathrm{L}^2(\Omega)}.
		\end{align} 
		We choose $f=\eta u_{f^*_{min}}(0,\cdot)$ for the initial condition in \eqref{varying initial data}, where $\eta\in C_c^{\infty}(\Omega)$ to be choosen later. Let $f_1=(1-\eta)u_{f^*_{min}}(0,\cdot)$. 
		Let $u_f,u_{f_1}$ be the solutions to \eqref{varying initial data} with initial datum $f$ and $f_1$ respectively. Thanks to  \eqref{H4}, we can conclude $f\in C_c^{\infty}(\Omega)$. [see \cite{fernandez2016boundary}, \cite{grubb2023resolvents}]. Integration by parts \eqref{integration by parts Hs} yields that:
		\begin{align*}
			\int_{\Omega} fu_{g^*}(0,\cdot) \, \rm{d}x&=\int_{\Omega} u_{f}(\Tilde{T},\cdot) g^{*} \, \rm{d}x\\
			\int_{\Omega} f_1u_{g^*}(0,\cdot) \, \rm{d}x&=\int_{\Omega} u_{f_1}(\Tilde{T},\cdot) g^{*} \, \rm{d}x.
		\end{align*}
		Substituting this in \eqref{minimizer weak formulation}, we obtain
		\begin{align*}
			\left\vert \int_{\Omega} \big(u_{f}(\Tilde{T},\cdot)+u_{f_1}(\Tilde{T},\cdot)-h\big)g^{*} \, \rm{d}x \right \vert \leq \epsilon \Vert g^*\Vert_{\mathrm{L}^2(\Omega)}.
		\end{align*}
		Since $g^*\in\mathrm{L}^2(\Omega)$ is arbitrary, we obtain:
		\[
		\Vert u_f(\Tilde{T},\cdot)+ u_{f_1}(\Tilde{T},\cdot)-h\Vert_{\mathrm{L}^2(\Omega)} \leq \epsilon. 
		\]
		Triangle inequality yields:
		\begin{align}\label{triangle inequality initial data}
			\Vert u_f(\Tilde{T},\cdot) -h\Vert_{\mathrm{L}^2(\Omega)} \leq \epsilon +\Vert  u_{f_1}(\Tilde{T},\cdot)\Vert_{\mathrm{L}^2(\Omega)}.
		\end{align}
		It remains to choose $\eta\in C_c^{\infty}(\Omega)$ such that $\displaystyle{\Vert  u_{f_1}(\Tilde{T},\cdot)\Vert_{\mathrm{L}^2(\Omega)}\leq \epsilon}$. Thanks to \eqref{H0}, we have
		\[
		\sup\limits_{t\geq T}\Vert u_{f_1}(\Tilde{T},\cdot)\Vert_{C_c^a(\mathbb{R}^N)}\leq C_1(\Tilde{T})\Vert f_1\Vert_{\mathrm{L}^2(\Omega)} \leq  C_1(\Tilde{T})\Vert (1-\eta) u_{f^*_{min}}(0,\cdot)\Vert_{\mathrm{L}^2(\Omega)}.
		\]
		Again, thanks to \eqref{H0}, we have
		\begin{align}\label{sup u fstar min}
			\Vert u_{f^*_{min}}(0,\cdot)\Vert_{C^a_c(\mathbb{R}^N)}\leq C^*_1(\Tilde{T})\Vert f^*_{min}\Vert_{\mathrm{L}^2(\Omega)}.
		\end{align}
		If $f^*_{min}\equiv 0$, then thanks to \eqref{sup u fstar min}, we can choose arbitrary $\eta$. So Let $f^*_{min}\neq 0$, and $\Tilde{\Omega}$ is a domain compactly contained in $\Omega$ such that 
		\[
		\mbox{Volume}\left|\Omega\setminus\Tilde{\Omega}\right|\leq \epsilon^2C_1^{-2}(\Tilde{T})\left(C_1^*\right)^{-2}(\Tilde{T}) \Vert f^*_{min}\Vert^{-2}_{\mathrm{L}^2(\Omega)}.
		\]
		Employing H\"older inequality, we obtain
		\[
		\Vert u_{f_1}(\Tilde{T},\cdot) \Vert_{\mathrm{L}^2(\Omega)} \leq C_1(\Tilde{T})\sup\{|1-\eta|\} \sup \Vert u_{f_{min}^*}(0,\cdot)\Vert_{C^0_c(\mathbb{R}^N)}  \left(\mbox{Volume}\left|\Omega\setminus\Tilde{\Omega}\right|\right)^{\frac{1}{2}},
		\]
		where $\eta\in C_c^{\infty}(\Omega)$ such that $\eta \equiv 1$ in $\Tilde{\Omega}$. The choice of $\Tilde{\Omega}$ yields 
		\[
		\Vert u_{f_1}(\Tilde{T},\cdot)\Vert_{\mathrm{L}^2(\Omega)}\leq \epsilon.
		\]
		Hence for a given $\Tilde{T}>0$, the set $\{u_f(\Tilde{T},\cdot): f\in C_c^{\infty}(\Omega)\}$ is dense in $\mathrm{L}^2(\Omega)$.
	\end{proof}	
	Next we are interested in singular boundary data system \eqref{varying boundary data}. We like to analyze the density in $\mathrm{L}^2(\Omega)$ space on a particular time slice by varying singular boundary data. We describe two different ways to obtain the density of the solution space. One through observability estimate (see \cite{biccari2025boundary}) and the other through boundary unique continuation type result. However, the observability estimate only holds for small non-negative potential `$q$' with small growth. We start with an observability estimate. 
	\begin{Prop}\label{observability estimate}
		Let $a\in\left(\frac12, 1\right)$ and let $f^*\in \mathrm{L}^2(\Omega)$ and let $0\leq \epsilon< T$. Let $u_{f^*}$ be the solution to \eqref{adjoint system initial data} with initial data $f^*$. Let $\displaystyle{\theta< \frac{1}{2}\left(1+C_{HS}\left(\frac N2+R\right)\right)^{-1}}$, where $C_{HS}$ is the Hardy-Sobolev constant on the domain $\Omega$ and $R:=\max\{\Vert x\Vert, x\in\Omega\}$. Let the potential $q$ is non-negative and $|q(x)|,|\nabla q(x)|\leq \theta$ for all $x\in\Omega$. Then the following observability estimate holds 
		\[
		\mathcal{A}\int_{0}^{T-\epsilon} \int_{\partial\Omega}\left(\frac{u_{f^*}}{d^a}\right)^2(x\cdot\nu) \, \rm{d}\sigma \, \rm{d}s \geq \Vert u_{f^*}(0)\Vert_{\mathrm{L}^2(\Omega)},  
		\]
		where, the positive constant $\mathcal{A}$, depends only on the domain, the dimension, the time factor $T-\epsilon$ and the exponent `$a$'.
	\end{Prop}
\begin{proof}
	The proof is exactly similar to the proof of observability estimate in \cite{biccari2025boundary}. The only difference is we have a small potential here, which was not present in \cite{biccari2025boundary}. The idea is to obtain a control function that renders the solution null controllable within a specific finite-dimensional subspace,
	\begin{align}\label{finite projective space}
	\mathcal{H}_{J}:=span\{\phi_1,\cdots,\phi_J\},
	\end{align}
	and then use this control as a test function in the adjoint system with the given initial data \eqref{adjoint system initial data}. Here $\{\phi_i\}\Big|_{i\in\mathbb{N}}$ is the eigenfunctions of the operator $(-\Delta)^a+q$, which generates an orthonormal basis of $\mathrm{L}^2(\Omega)$. In \cite{biccari2025boundary}, the authors achieved this by making a detour through the wave equation, establishing an observability estimate for its energy, specifically for solutions lying in the finite-dimensional subspace $\mathcal{H}_J$. Although in \cite{biccari2025boundary} authors don't have a potential `$q$'. We claim that even if we have a small potential we can obtain similar result. So, in order to prove Proposition \ref{observability estimate}, we make a little detour to the wave equation with a small potential and will obtain a observability estimate corresponding to it's energy, when the solutions are coming  from the space $\mathcal{H}_J$. We move the proof in Appendix-\ref{observability wave}. 
\end{proof}
The above Proposition enables us to prove  the density of the solution space corresponding to the singular boundary data \eqref{varying boundary data}.
	
\vspace{.2cm}
	
\textbf{Proof of theorem \ref{Quantitative Density result boundary data}:}
	\begin{proof}
		Again we  present two proofs here. One using integration by parts \eqref{integration by parts} and another by a variational approach. Due to technical reason, for the variational approach, we will assume the potential is small, non-negative and it's space growth is also small.
		\newline 
		First proof: Let the set $\displaystyle{\{u_F(\Tilde{T},\cdot), \, F\in C_c^{\infty}((0,T)\times\partial\Omega)\}}$ is not dense in $\mathrm{L}^2(\Omega)$. Then, by Hahn-Banach theorem, there exists $g\neq0$ in the normal of the closure of the solution set $\displaystyle{\{u_F(\Tilde{T},\cdot), \, F\in C_c^{\infty}((0,T)\times\partial\Omega)\}}$ such that $\Vert g\Vert_{\mathrm{L}^2(\Omega)}=1$ and
		\begin{align}\label{inner product zero, boundary data}
			\langle u_F(\Tilde{T},\cdot), g \rangle=0, \qquad \forall F\in C_c^{\infty}((0,T)\times\partial\Omega).
		\end{align}
		Let $u_{g}$ be the solution to the adjoint system \eqref{adjoint system initial data} with $g$ as the initial data at the time $T=\Tilde{T}$. Thanks to Theorem \ref{s-transmission regularity, initial data}, we have that
\[
u_{g}\in C\left((0,\tilde{T}); H^{a(2a)}(\overline{\Omega})\right) \cap C^1((0,\tilde{T}); \mathrm{L}^2(\Omega))\cap C([0,\tilde{T}]; \mathrm{L}^2(\Omega)).
\]
 We apply integration by parts formula \eqref{integration by parts}:
		\[
		\int_{\Omega} u_{g}(-\Delta)^a u_{F}-\int_{\Omega}u_{F}(-\Delta)^au_{g} =-\Gamma(a)\Gamma(a-1)\int_{\partial\Omega} \frac{u_{F}}{d^{a-1}}\frac{u_{g}}{d^a}.
		\]
		From equation \eqref{varying boundary data}, we obtain
		\[
		\int_{\Omega} u_F\partial_t u_{g} +u_{g}\partial_t u_F = \int_{\Omega}\partial_{t}\left(u_{g}u_F\right)=-\Gamma(a)\Gamma(a-1)\int_{\partial\Omega} \frac{u_{F}}{d^{a-1}}\frac{u_{g}}{d^a} .
		\] 
		Integrating with respect to $t$ and continuity in time variable yields: 
		\[
		\int_{\Omega} u_{g}(\Tilde{T},\cdot)u_F(\Tilde{T},\cdot) \, \rm{d}x=-\Gamma(a)\Gamma(a-1)\int_{0}^{\Tilde{T}}\int_{\partial\Omega} \frac{u_{F}}{d^{a-1}}\frac{u_{g}}{d^a}\, \rm{d}x \, \rm{d}s.
		\]
		Thanks to \eqref{inner product zero, boundary data} and since $F\in C_c^{\infty}((0,T)\times\partial\Omega)$ is arbitrary, we deduce that
		\[
		\frac{u_{g}}{d^a}\equiv 0.
		\] 
		Proposition \ref{observability estimate} yields
		\[
		 g\equiv 0 \ \ \ \mbox{a contradiction}.
		\]
		Next we present a proof via variational approach. It is more quantitative in nature. We prove that the solution set $\displaystyle{\{u_F(\Tilde{T},\cdot), \, F\in \mathrm{L}^2((0,T)\times\partial\Omega), F(0,\cdot)=0\}}$ is dense in $\mathrm{L}^2(\Omega)$. The theorem then follows because of the space $C_c^{\infty}((0,T)\times\Omega)$ is dense in $\mathrm{L}^2((0,T)\times\Omega)$ and \cite{biccari2025boundary}[Proposition A.13]. In order to apply observability estimate we assume the potential is non-negative and small as described in Proposition \ref{observability estimate}.
		\newline
		Second proof: Consider the functional 
		\[
		\mathcal{J}_{\epsilon,a} (F^*) =\frac{1}{2} \int_{0}^{\Tilde{T}}\int_{\partial\Omega} \eta^2_{\Tilde{T}}\left\vert  \frac{u_{F^*}}{d^a}\right\vert^2 \, \rm{d}x \, \rm{d}t+\epsilon \Vert {F^{*}}\Vert_{\mathrm{L}^2(\Omega)}-\int_{\Omega} h {F^{*}} \, \rm{d}x,
		\]
		where $\epsilon>0$ is a fix quantity, $h\in\mathrm{L}^2(\Omega)$ is a given function and $\eta_{\Tilde{T}}\in C_c^{\infty}((0,\Tilde{T}))$ such that $\eta_{\Tilde{T}}\equiv 1$ in $[T_1, \tilde{T}]\subset(0,\Tilde{T}]$. Furthermore $u_{F^*}$ and $F^*$ are connected by \eqref{adjoint system initial data} with $F^*$ as the initial data at $T=\Tilde{T}$. We intend to show the functional $\displaystyle{\mathcal{J}_{\epsilon,a}: \mathrm{L}^2(\Omega)\to \mathbb{R}\cup\{+\infty\}}$ is lower semicontinuous, strictly convex and coercive. Just like the case of theorem \ref{Density result initial data} continuity of $\mathcal{J}_{\epsilon,a}$ follows from the eigenvalue representation and \eqref{H1}. Here \eqref{H1} provides the boundary continuity. Strict convexity follows from the strict convexity of the $\mathrm{L}^2$-norm and oberservability estimate Proposition \ref{observability estimate}. We only prove coercivity of the functional $\mathcal{J}_{\epsilon,a}$.
		\begin{itemize}
			\item[(a).] Coercivity: Let $F_m^*\in\mathrm{L}^2(\Omega)$ for all $m\in\mathbb{N}$ and $\displaystyle{\Vert F_m^*\Vert_{\mathrm{L}^2(\Omega)}\to +\infty}$ as $m\to+\infty$. Consider the functions $\displaystyle{G_m^*= \frac{F_m^*}{\Vert F_m^*\Vert_{\mathrm{L}^2(\Omega)}}}$. Then $\displaystyle{\Vert G_m^*\Vert_{\mathrm{L}^2(\Omega)}=1}$ for all $m\in\mathbb{N}$. Let $u_{G^*_m}$ be the solution to \eqref{adjoint system initial data} with respect to the initial data $G_m^*$. Then $\displaystyle{u_{G^*_m}=\frac{u_{F^*_m}}{\Vert F_m^*\Vert_{\mathrm{L}^2(\Omega)}}}$. We divide the functional $\mathcal{J}_{\epsilon,a}$ by $\Vert F_m^*\Vert_{\mathrm{L}^2(\Omega)}$. It yields 
			\begin{align*}
				\frac{\mathcal{J}_{\epsilon,a}(F^*_m)}{\Vert F_m^*\Vert_{\mathrm{L}^2(\Omega)}}= \frac{1}{2} \Vert F_m^*\Vert_{\mathrm{L}^2(\Omega)} \int_0^{\Tilde{T}} \int_{\partial\Omega} \eta_{\Tilde{T}}^2 \left| \frac{u_{G_m^{*}}}{d^a}\right|^2 \, \rm{d}x \, \rm{d}t
				+ \epsilon - \int_{\Omega} h {G_m^*} \, \rm{d}x.
			\end{align*}
			There can be two different scenarios. One  is
			\[
			\liminf_{m\in\mathbb{N}}\int_0^{\Tilde{T}} \int_{\partial\Omega} \eta_{\Tilde{T}}^2 \left| \frac{u_{G_m^{*}}}{d^a}\right|^2 \, \rm{d}x \, \rm{d}t>0 \implies \mathcal{J}_{\epsilon,a}\to+\infty.
			\]
			The second scenario is $\displaystyle{\liminf_{m\in\mathbb{N}}\int_0^{\Tilde{T}} \int_{\partial\Omega} \eta_{\Tilde{T}}^2 \left| \frac{u_{G_m^{*}}}{d^s}\right|^2 \, \rm{d}x \, \rm{d}t=0}$. Thanks to the observability estimate Proposition \ref{observability estimate}, we have that  $\displaystyle{\Vert u_{G_m^{*}}(t)\Vert_{\mathrm{L}^2(\Omega)}\to 0}$ as $m\to+\infty$, for all $t\in[0,\Tilde{T}]$. Here the end points are also included because of continuity in time variable. We can represent the solution $u_{G_m^*}$ as
			\[
			u_{G_m^*}(t,x)= \sum\limits_{n\in\mathbb{N}} e^{-\lambda_n(\Tilde{T}-t)} \langle\phi_n, G_m^*\rangle\phi_n(x).
			\]
			Hence $\displaystyle{\langle\phi_n, G_m^*\rangle \to 0}$ as $m\to+\infty$. As $G_m^*$ is uniformly bounded in $\mathrm{L}^2$, there exists $G^*\in \mathrm{L}^2(\Omega)$ such that $G_m^* \rightharpoonup G^*$ weakly in $\mathrm{L}^2(\Omega)$. This further implies
			\[
			\langle\phi_n, G_m^*\rangle \to \langle\phi_n, G^*\rangle \to 0 \ \ \mbox{as}\ m\to +\infty.
			\]
			Hence $G^*\equiv 0$ and we have that 
			\[
			\int_{\Omega} h G_m^* \to 0 \ \ \mbox{as} \ m\to +\infty.
			\]
			Hence $\mathcal{J}_{\epsilon,a}(F_m^*)\to +\infty$ as $\displaystyle{\Vert F_m^*\Vert_{\mathrm{L}^2(\Omega)}\to+\infty}$ i.e., $\mathcal{J}_{\epsilon,a}$ is coercive. 
		\end{itemize}
		Continuity, strict convexity and coercivity helps us conclude  that $\mathcal{J}_{\epsilon,a}$ attains it's minimum for some $F_{min}^*\in \mathrm{L}^2(\Omega)$. We fix a function $G^{*}\in\mathrm{L}^2(\Omega)$ and let $u_{G^*}$ be the solution to \eqref{adjoint system initial data} corresponding to the initial data $G^{*}$. We have
		\[
		\mathcal{J}_{\epsilon,a} (F_{min}^*+\alpha G^*)-\mathcal{J}_{\epsilon,a} (F_{min}^*) \geq 0, \quad \forall \, \alpha\in\mathbb{R}.
		\]
		For $\alpha\geq0$, it yields
		\begin{align*}
			\frac{1}{2} &\int_0^{\Tilde{T}} \int_{\partial\Omega} 2\alpha \eta_{\Tilde{T}}^2  \frac{u_{F^*_{min}}}{d^a}\frac{u_{G^*}}{d^a} \, \rm{d}x \, \rm{d}t + \frac{1}{2} \int_0^{\Tilde{T}}\int_{\partial\Omega} \alpha^2 \eta_{\Tilde{T}}^2 \frac{(u_{G^*})^2}{d^{2a}} \, \rm{d}x \, \rm{d}t \\
			+&\epsilon\frac{\alpha^2\Vert G^*\Vert^2_{\mathrm{L}^2(\Omega)}+2\alpha \int_{\Omega} F^*_{min}{G^*} \, \rm{d}x}{\Vert F^*_{min}+\alpha G^*\Vert_{\mathrm{L}^2(\Omega)}+\Vert F_{min}^*\Vert_{\mathrm{L}^2(\Omega)}}-\alpha \int_{\Omega} h G^* \, \rm{d}x \geq 0. 
		\end{align*}
		We divide the whole expression by $\alpha$ and let $\alpha\to0\uparrow$. It yields 
		\begin{align*}
			\int_0^{\Tilde{T}} \int_{\partial\Omega} \eta_{\Tilde{T}}^2    \frac{u_{F^*_{min}}}{d^a}\frac{u_{G^*}}{d^a} \, \rm{d}x \, \rm{d}t 
			+ \epsilon \frac{\int_{\Omega} F^*_{min}G^* \, \rm{d}x}{\Vert F_{min}^*\Vert_{\mathrm{L}^2(\Omega)}}- \int_{\Omega} h G^* \, \rm{d}x \geq 0.
		\end{align*}
		Employing Cauchy-Schwartz inequality, we obtain 
		\begin{align*}
			\int_0^{\Tilde{T}} \int_{\partial\Omega}  \eta_{\Tilde{T}}^2  \frac{u_{F^*_{min}}}{d^a}\frac{u_{G^*}}{d^a} \, \rm{d}x \, \rm{d}t - \int_{\Omega} h G^* \, \rm{d}x \geq -\epsilon \Vert G^*\Vert_{\mathrm{L}^2(\Omega)}.
		\end{align*}
		Similarly for $\alpha\leq 0$, we obtain 
		\begin{align*}
			\int_0^{\Tilde{T}} \int_{\partial\Omega}    \eta_{\Tilde{T}}^2\frac{u_{F^*_{min}}}{d^a}\frac{u_{G^*}}{d^a} \, \rm{d}x \, \rm{d}t - \int_{\Omega} h G^* \, \rm{d}x \leq \epsilon \Vert G^*\Vert_{\mathrm{L}^2(\Omega)}.
		\end{align*}
		Together, we obtain:
		\begin{align} \label{minimizer weak formulation boundary data}
			\left|\int_0^{\Tilde{T}} \int_{\partial\Omega}    \eta_{\Tilde{T}}^2\frac{u_{F^*_{min}}}{d^a}\frac{u_{G^*}}{d^a} \, \rm{d}x \, \rm{d}t - \int_{\Omega} h G^* \, \rm{d}x \right|\leq \epsilon \Vert G^*\Vert_{\mathrm{L}^2(\Omega)}.
		\end{align} 
		We choose $\displaystyle{F=\eta_{\Tilde{T}}^2 \frac{u_{F^*_{min}}}{d^a}}$ for the singular boundary data in \eqref{varying boundary data}.
		Let $u_F$ be the solutions to \eqref{varying boundary data} with the singular boundary data $F$. Integration by parts \eqref{integration by parts} yields that:
		\begin{align*}
			\int_{\Omega} u_F(\Tilde{T},\cdot)G^*(\cdot) \, \rm{d}x=-\Gamma(a)\Gamma(a-1)\int_{0}^{\Tilde{T}}\int_{\partial\Omega} \frac{u_{F}}{d^{a-1}}\frac{u_{G^*}}{d^a}\, \rm{d}x \, \rm{d}s.
		\end{align*}
		Substituting this in \eqref{minimizer weak formulation boundary data}, we obtain
		\begin{align*}
			\left\vert \int_{\Omega} \big(u_{F}(\Tilde{T},\cdot)-h\big)G^{*} \, \rm{d}x \right \vert \leq \epsilon \Vert G^*\Vert_{\mathrm{L}^2(\Omega)}.
		\end{align*}
		Since $G^*\in\mathrm{L}^2(\Omega)$ is arbitrary, we obtain:
		\[
		\Vert u_F(\Tilde{T},\cdot)-h\Vert_{\mathrm{L}^2(\Omega)} \leq \epsilon. 
		\]
\end{proof}
	
Now we  move onto the proofs of the Calder\'on type inverse problems for the non-local  heat equations. We recover the potentials by analyzing the solution at a particular time slice for \eqref{varying initial data} and by analyzing the solution on the boundary for \eqref{varying boundary data}.

	\section{Calder\'on type Inverse Problems:}

We start with establishing the equivalency of the two Calder\'on type inverse problems as described in Theorem \ref{inverse result initial data} and Theorem \ref{inverse result boundary data} respectively. 	

\vspace{.2cm}
	
\textbf{Proof of theorem \refeq{inverse result initial data}:}
	\begin{proof}
		We show that proving Theorem \ref{inverse result initial data} is same as proving Theorem \ref{inverse result boundary data}. Let $u_1$, $u_2$ be the solution of \eqref{varying initial data inverse result}. Let for $i=1,2$, $w_i$ satisfies
		\begin{equation*}
			\left\{
			\begin{aligned}
				-\partial_t w_{i}+(-\Delta)^a w_i+ q_iw_i=&0 \qquad \qquad  \mbox{in}\ (0,T)\times\Omega\\
				w_i=&0 \qquad \qquad  \mbox{in} \ (0,T)\times\mathbb{R}^N\setminus\overline{\Omega}\\
				\lim\limits_{\substack{x\to y\\ y\in\partial\Omega}}\frac{w_i}{d^{a-1}}(y)=& F \qquad \qquad \mbox{on}\  (0,T)\times\partial\Omega\\
				w_i(T,\cdot)=&0 \qquad \qquad  \mbox{in} \ \Omega,
			\end{aligned}
			\right .
		\end{equation*}
		where $F\in C_c^{\infty}((0,T)\times\partial\Omega)$. Since $\displaystyle{w_i\in \mathrm{L}^2\left((0,T); H^{(a-1)(2a)}(\overline{\Omega})\right)} $  for $i=1,2$ [see theorem \ref{s-transmission regularity, boundary data}, theorem \ref{s-transmission regularity, initial data}], integration by parts \eqref{integration by parts} yields
		\[
		\int_{\Omega} u_{i}(-\Delta)^a w_{i}-\int_{\Omega}w_{i}(-\Delta)^a u_{i} =-\Gamma(a)\Gamma(a-1)\int_{\partial\Omega} \frac{w_{i}}{d^{a-1}}\frac{u_{i}}{d^a}, \quad i=1,2,
		\]
		where, for $i=1,2$, $u_i$'s are the solutions of \eqref{varying initial data inverse result}. Replacing $(-\Delta)^a u_i$ and $(-\Delta)^a w_i$ using the equations yields:
		\[
		\int_{\Omega} w_{i}\partial_t u_{i} +w_{i}\partial_t u_i = \int_{\Omega}\partial_{t}\left(u_{i}w_i\right)=-\Gamma(a)\Gamma(a-1)\int_{\partial\Omega} \frac{w_i}{d^{a-1}}\frac{u_i}{d^a}, \quad i=1,2 .
		\] 
		Integrating with respect to $t$ yields: 
		\[
		\int_{\Omega} w_{i}(0,\cdot)u_i(0,\cdot) \, \rm{d}x=-\Gamma(a)\Gamma(a-1)\int_{0}^{T}\int_{\partial\Omega} \frac{w_{i}}{d^{a-1}}\frac{u_{i}}{d^a}\, \rm{d}x \, \rm{d}s, \quad i=1,2.
		\]
		Subtracting we obtain
		\[
		\int_{\Omega} f(w_1(0,\cdot)-w_2(0,\cdot)) \, \rm{d}x=0,
		\]
		as we have $\displaystyle{\frac{u_1}{d^a}=\frac{u_2}{d^a}}$ for all $f\in C_c^{\infty}(\Omega)$. Hence we obtain
		\[
		w_1(0,\cdot)\equiv w_2(0,\cdot).
		\]
		Our goal is to obtain $q_1\equiv q_2$. So our problem boils down to the following problem: Let $w_i$, for $i=1,2$ satisfy
		\begin{equation*}
			\left\{
			\begin{aligned}
				-\partial_t w_{i}+(-\Delta)^a w_i+ q_iw_i=&0 \qquad \qquad  \mbox{in}\ (0,T)\times\Omega\\
				w_i=&0 \qquad \qquad  \mbox{in} \ (0,T)\times\mathbb{R}^N\setminus\overline{\Omega}\\
				\lim\limits_{\substack{x\to y\\ y\in\partial\Omega}}\frac{w_i}{d^{a-1}}(y)=& F \qquad \qquad \mbox{on}\  (0,T)\times\partial\Omega\\
				w_i(T,\cdot)=&0 \qquad \qquad  \mbox{in} \ \Omega,
			\end{aligned}
			\right .
		\end{equation*}
		where $F\in C_c^{\infty}((0,T)\times\partial\Omega)$. Let 
		\[
		w_1(0,\cdot)\equiv w_2(0,\cdot), \quad \forall \, F\in C_c^{\infty}((0,T)\times\partial\Omega).
		\]	
		Then 
		\[
		q_1\equiv q_2.
		\]
		The change of variable $t \to T-t$ transforms the problem into the one described in Theorem \eqref{inverse result boundary data}. Hence it is enough to prove theorem \eqref{inverse result boundary data}.
	\end{proof}

\textbf{Proof of theorem \eqref{inverse result boundary data}:}
\begin{proof} 
	Let $v=w_1-w_2$ satisfy
\begin{equation*}\label{difference equation}
	\left \{
	\begin{aligned}
		\partial_t v+(-\Delta)^av+(q_1w_1-q_2w_2)=& 0 \qquad \mbox{in} \ (0,T)\times\Omega\\
		v= & 0 \qquad \mbox{in} \ (0,T)\times\mathbb{R}^N\setminus\Omega\\
		v(0,\cdot)=& 0 \qquad \mbox{in} \ \{0\}\times\Omega. 
	\end{aligned}
	\right .
\end{equation*}	
Let $V$ satisfy
\begin{equation*}\label{Auxiliary test equation}
	\left \{
	\begin{aligned}
		\partial_t V+(-\Delta)^aV=& 0 \qquad \mbox{in} \ (0,T)\times\Omega\\
		V= & 0 \qquad \mbox{in} \ (0,T)\times\mathbb{R}^N\setminus\overline{\Omega}\\
		\frac{V}{d^{a-1}}=& F \qquad \mbox{on} \ (0,T)\times\partial\Omega\\
		V(0,\cdot)=& 0 \qquad \mbox{in} \ \{0\}\times\Omega, 
	\end{aligned}
	\right .
\end{equation*}	
where $F(0,\cdot)=0$ and $F\in \mathrm{L}^2((0,T); H^{a+\frac{1}{2}}(\partial\Omega))$. Hence Thanks to theorem \ref{s-transmission regularity, boundary data} \cite{grubb2023resolvents}, we have that 
\[
V \in \mathrm{L}^2\left((0,T); H^{(a-1)(2a)}(\overline{\Omega})\right) \cap \overline{H}^1\left((0,T); \mathrm{L}^2(\Omega)\right).
\] 
Integration by parts \eqref{integration by parts} yields
\begin{align*}
	\int_{\Omega} V(s,x)(-\Delta)^a v(t,x) \, \rm{d}x-&\int_{\Omega} v(t,x)(-\Delta)^a V(t,x) \, \rm{d}x\\
	&=
	-\Gamma(a)\Gamma(a+1) \int_{\partial\Omega}  \frac{V}{d^{a-1}}(t,\sigma)\frac{v}{d^{a}}(s,\sigma) \, \rm{d}\sigma=0.  
\end{align*}
Integrating with respect to variable `$t$' yields
\[
\int_0^T \int_{\Omega} V(s,x)(-\Delta)^a v(t,x)- v(t,x)(-\Delta)^a V(t,x) \, \rm{d}x \, \rm{d}t=0
\]
Equation \eqref{difference equation} and equation \eqref{Auxiliary test equation} yields
\[
\int_{0}^T \int_{\Omega} \left( -\partial_t v(t,x) -(q_1w_1-q_2w_2)(t,x)\right) V(s,x) \, \rm{d}x\, \rm{d}t= \int_{0}^T \int_{\Omega} \left( -\partial_s V(s,x)\right) v(t,x) \, \rm{d}x \, \rm{d}t. 
\]
Simplifying, it implies
\begin{align}\label{Neumann inverse problem parabolic: intermedite 1}
	-\int_{\Omega} v(T,x) V(s,x) \, \rm{d}x +& \int_{0}^{T}\int_{\Omega} v(t,x) \partial_s V(s,x) \, \rm{d}x\, \rm{d}t\\
	=& \int_{0}^T \int_{\Omega} (q_1w_1-q_2w_2)(t,x) V(s,x)\, \rm{d}x\, \rm{d}t \quad \mbox{for all} \ s>0. \nonumber
\end{align}
Consider the function $w:(0,+\infty)\times(0,+\infty)\to\mathbb{R}$, denoted by 
\[
w(t,s)=\int_{\Omega} v(t,x)V(s,x).
\]
We will show `$w$' satisfy a particular transport equation. We compute the term $\partial_t w-\partial_s w$.
\begin{align*}
	\partial_t w-\partial_s w = \int_{\Omega} \partial_t v(t,x) & V(s,x) \, \rm{d}x -\int_{\Omega} v(t,x) \partial_s V(s,x) \, \rm{d}x\\
	= - \int_{\Omega} V(s,x)&(-\Delta)^a v(t,x) \, \rm{d}x+ \int_{\Omega} v(t,x)(-\Delta)^a V(s,x) \, \rm{d}x\\
	-& \int_{\Omega} (q_1w_1-q_2w_2)(t,x)V(s,x)\, \rm{d}x
\end{align*} 
Employing integration by parts formula \eqref{integration by parts}, we obtain
\begin{align*}
	\partial_t w-\partial_s w=   -\int_{\Omega} & (q_1w_1-q_2w_2)(t,x)V(s,x)\, \rm{d}x\\
	+ & \Gamma(a)\Gamma(a+1) \int_{\partial\Omega} \frac{V}{d^{a-1}}(s,\sigma)\frac{v}{d^{a}}(t,\sigma)\, \rm{d}\sigma.
\end{align*}
As $\displaystyle{\frac{v}{d^{a}}\Big|_{\partial\Omega}=0}$, we obtain
\[
\partial_t w-\partial_s w=   -\int_{\Omega} (q_1w_1-q_2w_2)(t,x)V(s,x)\, \rm{d}x.
\]
Hence `$w$' satisfy the following transport equation 
\begin{equation*}
	\left \{
	\begin{aligned}
		\partial_t w- \partial_s w= & -\int_{\Omega} (q_1w_1-q_2w_2)(t,x)V(s,x)\, \rm{d}x \qquad \mbox{in}\ (0,+\infty)\times(0,+\infty)\\
		w(0,s)=& 0 \qquad \qquad \qquad \qquad\qquad \qquad \qquad \ \qquad \mbox{on}\ \{0\}\times(0,+\infty)\\
		w(t,0)=& 0 \qquad \qquad \qquad \qquad\qquad \qquad \qquad \ \qquad \mbox{on}\ (0,+\infty)\times\{0\}.
	\end{aligned}
	\right .
\end{equation*}
We perform a change of variable `$t-s\to\xi$' and `$s\to s$'. We can rewrite the equation corresponding to `$w$' as 
\[
\partial_{\xi} w= -\int_{\Omega} (q_1w_1-q_2w_2)(\xi+s,x)V(s,x)\, \rm{d}x.
\]
Hence for any $s\geq 0$, `$w$' satisfies the following ordinary differential equation: 
\begin{equation*}
	\left\{
	\begin{aligned}
		\partial_{\xi} w=& -\int_{\Omega} (q_1w_1-q_2w_2)(\xi+s,x)V(s,x)\, \rm{d}x \qquad \mbox{in} \ (-s,+\infty)\\
		w(-s)=& 0.
	\end{aligned}
	\right .
\end{equation*}
This implies
\[
w(\xi',s)= -\int_{-s}^{\xi'}\int_{\Omega} (q_1w_1-q_2w_2)(\xi+s,x)V(s,x)\, \rm{d}x\, \rm{d}t.
\]
Let $\xi'+s=T$, then change of variable yields
\begin{align*}
	w(T,s)=&- \int_{0}^{T} \int_{\Omega} (q_1w_1-q_2w_2)(t,x) V(s,x)\, \rm{d}x\, \rm{d}t\\
	=& \int_{\Omega} v(T,x)V(s,x)\, \rm{d}x \qquad \mbox{for all}\ (T,s)\in [0,+\infty)\times[0,+\infty).
\end{align*}
Thanks to \eqref{Neumann inverse problem parabolic: intermedite 1}, we obtain
\[
\int_{0}^{T}\int_{\Omega} v(t,x) \partial_s V(s,x)\, \rm{d}x\, \rm{d}t=0.
\]
Integrating the above relation with respect to `$s$' variable, we obtain
\[
\int_{0}^{S} \int_{0}^{T}\int_{\Omega} v(t,x) \partial_s V(s,x)\, \rm{d}x \, \rm{d}t \, \rm{d}s=0.
\]
Hence
\[
\int_0^T \int_{\Omega} v(t,x)V(S,x)\, \rm{d}x\, \rm{d}t=0 \qquad \mbox{for all}\ S\geq 0.
\]
We have $v\in\mathrm{L}^2\left((0,T); H^{a}(\overline{\Omega})\right)\subset \mathrm{L}^2((0,T); \mathrm{L}^2(\Omega))$. Hence the function $\displaystyle{\int_{0}^T v(t,x)\, \rm{d}t\in \mathrm{L}^2(\Omega)}$ and 
\[
\int_{\Omega} V(S,x)\left( \int_0^T v(t,x)\, \rm{d}t\right)\, \rm{d}x=0, \quad \mbox{for all}\ S\geq 0.
\]
We fix a particular $S>0$. then If we vary $F$, the set $\big\{V(S,x): F\in C_c^{\infty}((0,T)\times\partial\Omega)\big\}$ is dense in $\mathrm{L}^2(\Omega)$ [see Theorem \ref{Quantitative Density result boundary data}]. Hence
\[
\int_0^T
v(t,x)\, \rm{d}t=0, \qquad \mbox{for all}\,  T\geq 0.
\]
We fix a particular time $t=\tilde{T}>0$. We have
\[
\int_0^{\tilde{T}+h}
v(t,x)\, \rm{d}t- \int_0^{\tilde{T}-h}
v(t,x)\, \rm{d}t=0,
\]
where $h$ is sufficiently small. It yields 
\[
\frac{1}{2h} \int_{\tilde{T}-h}^{\tilde{T}+h} v(t,x)\, \rm{d}t=0.
\]
Employing Lebesgue differentiation theorem, we obtain
\[
v(\tilde{T},x)\equiv 0, \qquad \mbox{for all}\ (\tilde{T},x)\in[0,+\infty)\times\Omega.
\]  
hence
\[
w_1\equiv w_2.
\]
From the equation \eqref{varying boundary data inverse result}, we have that 
\[
(q_1-q_2)w_1\equiv 0.
\]
We fix $t=\tilde{T}> 0$, we have that
\[
\int_{\Omega} (q_1-q_2)(x)w_1(\tilde{T},x)\, \rm{d}x=0.
\]
Again, if we vary $f\in C_c^{\infty}((0,T)\times\Omega)$, the set $\displaystyle{\big\{w_1(\tilde{T},x): f\in C_c^{\infty}((0,T)\times\partial\Omega)\big\}}$ is dense in $\mathrm{L}^2(\Omega)$ [see Theorem \ref{Density result initial data}]. Hence
\[
(q_1-q_2)(\tilde{T},x)=0, \qquad \mbox{for a.e.} \ x\in\Omega.
\]
Since $\tilde{T}>0$ is arbitrary, we obtain
\[
q_1\equiv q_2.
\]
\end{proof}

	\section{Further result:}
	In this section, we focus on a specific non-local elliptic eigenvalue Calderón problem and solve it using Pohozaev's identity, as illustrated in \cite{ros2014pohozaev}. We aim to demonstrate to the reader the significance of Pohozaev's identity in solving various non-local Calderón problems. The problem discussed in this section is of an elliptic nature.
	We consider the eigenvalue problem
	\begin{equation}\label{elliptic eigen value}
		\left \{
		\begin{aligned}
			(-\Delta)^a \phi= & \lambda \phi \qquad \qquad \mbox{in}\ (0,T)\times\Omega\\
			\phi =& 0 \qquad \qquad \mbox{in} \ (0,T)\times\mathbb{R}^N\setminus\Omega. 
		\end{aligned}
		\right .
	\end{equation}
	Here, the domain $\Omega\subset\mathbb{R}^N$ is a bounded $C^{1,1}$ domain with boundary $\partial\Omega$ and $a\in(0,1)$.
	\begin{Thm}\label{inverse result eigen value}
		Let $\phi$ is a solution to the eigenvalue problem \eqref{elliptic eigen value}. Then for any non-empty smooth open set $\Sigma\subset\partial\Omega$
		\[
		\frac{\phi}{d^a}\Big|_{\Sigma} \neq 0.
		\]
	\end{Thm}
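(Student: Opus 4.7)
The plan is to apply the fractional Pohozaev identity~\eqref{Phozaev identity} to $u=\phi$, combine it with a translation trick, and then derive a contradiction from the hypothesis. Substituting $(-\Delta)^a \phi = \lambda\phi$ in~\eqref{Phozaev identity} and using the divergence identity $\int_\Omega (x\cdot\nabla\phi)\phi\,dx = -\tfrac{N}{2}\int_\Omega \phi^2\,dx$ (which follows from $\phi|_{\partial\Omega}=0$, itself a consequence of the boundary regularity $\phi/d^a\in C^\alpha(\overline{\Omega})$), I obtain
\[
\frac{\Gamma(1+a)^2}{2}\int_{\partial\Omega}\left(\frac{\phi}{d^a}\right)^2 (x\cdot\nu)\,d\sigma \;=\; a\,\lambda \int_\Omega \phi^2\,dx.
\]
The right-hand side is strictly positive because $\phi\not\equiv 0$ and the Dirichlet eigenvalue $\lambda$ of $(-\Delta)^a$ is positive.

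The second step is to exploit translation invariance. Because the fractional Laplacian commutes with translations and $\int_\Omega \phi\,\nabla\phi\,dx = 0$ componentwise (again from $\phi|_{\partial\Omega}=0$), the same derivation with $x$ replaced by $x-x_0$ yields the one-parameter family
\[
\frac{\Gamma(1+a)^2}{2}\int_{\partial\Omega}\left(\frac{\phi}{d^a}\right)^2 \bigl((x-x_0)\cdot\nu\bigr)\,d\sigma \;=\; a\,\lambda \int_\Omega \phi^2\,dx, \qquad \forall\, x_0 \in \mathbb{R}^N.
\]
Since the right-hand side is $x_0$-independent, differentiation in $x_0$ gives in particular the vector identity $\int_{\partial\Omega}(\phi/d^a)^2\,\nu\,d\sigma = 0 \in \mathbb{R}^N$.

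Suppose now, for contradiction, that $\phi/d^a\equiv 0$ on some non-empty open $\Sigma\subset\partial\Omega$. Then both integrals above have their integrands supported on the proper compact set $K:=\mathrm{supp}\bigl((\phi/d^a)^2\,d\sigma\bigr)\subset\partial\Omega\setminus\Sigma$. The aim is to choose $x_0$ so that $(x-x_0)\cdot\nu(x)\le 0$ uniformly on $K$, which would force the left-hand side of the second displayed identity to be non-positive while its right-hand side is strictly positive, yielding the contradiction. The main obstacle I foresee is the geometric step: for a non-convex $C^{1,1}$ domain the outward normals on $K$ may point in many directions, and a single translate $x_0$ need not make $(x-x_0)\cdot\nu$ non-positive on all of $K$ simultaneously. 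To bypass this I would upgrade to the general vector-field Pohozaev identity (in the spirit of Ros-Oton--Serra and Grubb) applied to a $C^{0,1}$ vector field $V$ compactly supported in a neighborhood of $\Sigma$ and tangential to $\partial\Omega$ on $\partial\Omega\setminus\Sigma$; under our hypothesis $(V\cdot\nu)(\phi/d^a)^2$ then vanishes identically on $\partial\Omega$, so the boundary term drops out, and the remaining bulk quadratic form in $\phi$ (which, thanks to the eigenvalue equation, is controlled from below by $\lambda\|\phi\|_{L^2(\Omega)}^2$) is forced to vanish once $V$ is varied sufficiently, giving $\phi\equiv 0$ in $\Omega$ — contradicting the non-triviality of the eigenfunction.
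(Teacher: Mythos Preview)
Your derivation of the translated Pohozaev identity
\[
\frac{\Gamma(1+a)^2}{2}\int_{\partial\Omega}\Bigl(\frac{\phi}{d^a}\Bigr)^2\bigl((x-x_0)\cdot\nu\bigr)\,d\sigma \;=\; a\lambda\int_\Omega\phi^2\,dx\;>\;0
\]
and of the vector identity $\int_{\partial\Omega}(\phi/d^a)^2\,\nu\,d\sigma=0$ is correct and is precisely how the paper proceeds (its identity \eqref{Phozaev identity with alpha} together with \eqref{weak pohozaev}). From that point the paper finishes in one stroke: it asserts, citing \cite{uhlmann2014inverse}, that one may choose $\alpha=x_0$ with $\partial\Omega\setminus\Sigma\subset\{x\in\partial\Omega:(x-\alpha)\cdot\nu\le 0\}$; under the hypothesis $\phi/d^a\equiv 0$ on $\Sigma$ the left-hand side above is then non-positive, contradicting its strict positivity. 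Your geometric concern about the existence of such an $x_0$ is the only place where you part ways with the paper.

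The genuine gap in your proposal is the detour you take to address that concern. A ``general vector-field Pohozaev identity'' for the fractional Laplacian with a variable $C^{0,1}$ field $V$ is not an available tool: the Ros-Oton--Serra identity relies on the exact dilation $u\mapsto u(\lambda\,\cdot)$ and its interaction with $(-\Delta)^a$, which singles out the Euler field $V(x)=x$ (and, via \eqref{weak pohozaev}, constant fields). For a non-affine $V$ there is no substitute for the clean term $\tfrac{2a-n}{2}\int_\Omega\phi(-\Delta)^a\phi$, and your assertion that ``the remaining bulk quadratic form in $\phi$ is controlled from below by $\lambda\|\phi\|_{L^2}^2$'' is unsubstantiated --- for a localized $V$ the bulk contribution involves derivatives of $V$ in a nonlocal way and carries no definite sign, so varying $V$ does not force $\phi\equiv 0$. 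As written, your alternative route does not close; you should either produce the missing identity together with its positivity (which is not in the literature) or, as the paper does, argue directly for a suitable translate $x_0$ and stay with the affine identity you already have.
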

	Before begin the proof, we begin with some available results. The set containing eigenvalues  
	\[
	\{\lambda: \ \mbox{The equation \eqref{elliptic eigen value} has a solution}\}
	\]
	is countable and discrete and the eigenvalues can be described in the following monotonic manner
	\[
	0<\lambda_1<\lambda_2\leq \lambda_3\leq \cdots\leq \lambda_n\leq \cdots +\infty.
	\]
	The following regularity of the eigenfunctions can be found in \cite{ros2014dirichlet}, \cite{fernandez2016boundary}.
	\begin{itemize}
		\item [$\bullet$] $\phi\in H^a(\mathbb{R}^N)\cap C^a(\mathbb{R}^N)$ and, for every $\beta\in[a,1+2a)$, $\phi$ is of class $C^{\beta}(\Omega)$ and 
		\begin{align}\label{Elliptic H2}
			[\phi(t,\cdot)]_{C^{\beta}(\Omega_{\delta})} \leq C \delta^{a-\beta}, \forall \delta\in(0,1).
		\end{align}
		\item [$\bullet$] The function $\displaystyle{\frac{\phi}{d^a}}\Big|_{\Omega}$ can be continuously extended to $\overline{\Omega}$. Moreover, there exists $\alpha\in(0,1)$ such that $\displaystyle{\frac{\phi}{d^a}\in C^{\alpha}(\overline{\Omega})}$. In addition, for $\beta\in[\alpha,a+\alpha]$, the following estimate holds
		\begin{align}\label{H3 Elliptic}
			\left[\frac{\phi}{d^a}\right]_{C^{\beta}(\Omega_{\delta})} \leq C \delta^{a-\beta}, \forall \delta\in(0,1).
		\end{align}
	\end{itemize}
	Hence Pohozaev identity \eqref{Phozaev identity} holds for $\phi$ \cite{ros2014pohozaev}.
	\begin{align}\label{Phozaev identity phi}
		\int_{\Omega}(x\cdot \nabla \phi)(-\Delta)^a \phi \, \rm{d}x= \frac{2a-N}{2}\int_{\Omega}&\phi(-\Delta)^a \phi \, \rm{d}x\\
		&- \frac{\Gamma(1+a)^2}{2}\int_{\partial\Omega}\left(\frac{\phi}{d^a}\right)^2(x\cdot\nu) \, \rm{d}\sigma,\nonumber
	\end{align}
	where $\nu$ is the unit outward normal to $\partial\Omega$ at the point $x$ and $\Gamma$ is the gamma function. We state another version of Pohozaev identity. Let $\theta$ be another function such that $(-\Delta)^a\theta$ defined pointwise and $\displaystyle{\frac{\theta}{d^a}}$ continuously extended to boundary. Furthermore, let $\theta$ satisfy \eqref{Elliptic H2} and \eqref{H3 Elliptic}, then
	\begin{align}\label{weak pohozaev}
		\int_{\Omega} \theta_{x_i}(-\Delta)^a\phi  \, \rm{d}x= -\int_{\Omega} \phi_{x_i}&(-\Delta)^a\theta \, \rm{d}x\\
		-& \Gamma(1+a)^2\int_{\partial\Omega} \frac{\phi}{d^a}\frac{\theta}{d^a}\nu_i\, \rm{d}\sigma.\nonumber
	\end{align}
	Here the subscript $x_i$ denotes the derivative of the function with respect to the $i$th coordinate and $\nu_i$ is the $i$th component of the unit normal vector $\nu$ at the boundary. Proof of this can be found in \cite{ros2014pohozaev}.
	\newline
	Let $\alpha=(\alpha_1,\cdot,\alpha_N)\in\mathbb{R}^N$. For $1\leq i\leq N$, the function $\theta=\alpha_i \phi$ is an admissible function in \eqref{weak pohozaev}. It yields
	\begin{align*}
		\alpha_i\int_{\Omega} \phi_{x_i}(-\Delta)^a\phi  \, \rm{d}x= -\alpha_i\int_{\Omega} \phi_{x_i}&(-\Delta)^a\phi \, \rm{d}x\\
		-& \Gamma(1+a)^2\int_{\partial\Omega} \left(\frac{\phi}{d^a}\right)^2(\alpha_i\nu_i)\, \rm{d}\sigma.
	\end{align*}
	It implies
	\[
	\alpha_i\int_{\Omega} \phi_{x_i}(-\Delta)^a\phi  \, \rm{d}x=-\frac{\Gamma(1+a)^2}{2}\int_{\partial\Omega} \left(\frac{\phi}{d^a}\right)^2(\alpha_i\nu_i)\, \rm{d}\sigma, \quad \forall \, i=1,\cdots,N.
	\]
	Summing over the index `$i$' yields
	\[
	\int_{\Omega} (\alpha\cdot\nabla\phi)(-\Delta)^a\phi= -\frac{\Gamma(1+a)^2}{2}\int_{\partial\Omega} \left(\frac{\phi}{d^a}\right)^2(\alpha\cdot\nu)\, \rm{d}\sigma.
	\] 
	Subtracting from \eqref{Phozaev identity phi}, we obtain
	\begin{align}\label{Phozaev identity with alpha}
		\int_{\Omega}((x-\alpha)\cdot \nabla \phi)&(-\Delta)^a \phi \, \rm{d}x\\=& \frac{2a-N}{2}\int_{\Omega}\phi(-\Delta)^a \phi \, \rm{d}x
		- \frac{\Gamma(1+a)^2}{2}\int_{\partial\Omega}\left(\frac{\phi}{d^a}\right)^2((x-\alpha)\cdot\nu) \, \rm{d}\sigma, \quad \forall \, \alpha\in\mathbb{R}^N. \nonumber
	\end{align} 
	Next we move onto the proof of theorem \ref{inverse result eigen value}.
	
	\begin{proof}
		Let $\Omega\subset\mathbb{R}^N$ be a bounded $C^{1,1}$ domain with $\partial\Omega$ as it's boundary. Let $\Sigma\subset\partial\Omega$ be a non-empty open set. Then there exists $\alpha\in\mathbb{R}^N$, such that \cite{uhlmann2014inverse} 
		\begin{align*}
			\Sigma:=& \left\{ x\in\partial\Omega, (x-\alpha)\cdot\nu\geq 0\right\}\\
			\partial\Omega\setminus\Sigma:=& \left\{ x\in\partial\Omega, (x-\alpha)\cdot\nu\leq 0\right\}.
		\end{align*}
		We use the relation \eqref{Phozaev identity with alpha}:
		\begin{align*}
			\int_{\Omega}((x-\alpha)\cdot \nabla \phi)&(-\Delta)^a \phi \, \rm{d}x\\=& \frac{2a-N}{2}\int_{\Omega}\phi(-\Delta)^a \phi \, \rm{d}x
			- \frac{\Gamma(1+a)^2}{2}\int_{\partial\Omega}\left(\frac{\phi}{d^a}\right)^2((x-\alpha)\cdot\nu) \, \rm{d}\sigma. 
		\end{align*} 
		Plugging $(-\Delta)^a\phi=\lambda\phi$ in the above relation, we obtain
		\begin{align*}
			\int_{\Omega} &\lambda(x-\alpha)\cdot \nabla\left(\frac{\phi^2}{2}\right)\, \rm{d}x= \frac{2a-N}{2} \int_{\Omega} \lambda \phi^2\, \rm{d}x\\
			&- \frac{\Gamma(1+a)^2}{2}\int_{\Sigma}\left(\frac{\phi}{d^a}\right)^2((x-\alpha)\cdot\nu) \, \rm{d}\sigma- \frac{\Gamma(1+a)^2}{2}\int_{\partial\Omega\setminus\Sigma}\left(\frac{\phi}{d^a}\right)^2((x-\alpha)\cdot\nu) \, \rm{d}\sigma.
		\end{align*}
		We prove via contradiction. Let $\displaystyle{\frac{\phi}{d^a}\Big|_{\Sigma}= 0}$. Employing Gauss divergence theorem on the first term and thanks to $\displaystyle{\phi\equiv0}$ outside $\Omega$, we obtain the following relation.
		\begin{align*}
			a\lambda \int_{\Omega} \phi^2 \, \rm{d}x=   \frac{\Gamma(1+a)^2}{2}\int_{\partial\Omega\setminus\Sigma}\left(\frac{\phi}{d^a}\right)^2((x-\alpha)\cdot\nu) \, \rm{d}\sigma \leq 0
		\end{align*}
		As $a,\lambda>0$, it yields $\phi\equiv 0$, which contradicts $\lambda$ is an eigenvalue. Hence for any open subset $\Sigma\subset\partial\Omega$
		\[
		\frac{\phi}{d^a}\Big|_{\Sigma} \neq 0.
		\]
	\end{proof}

	\vspace{.3cm}

	\textbf{Acknowledgment:} 
	The author was funded by the Department of Atomic Energy $($DAE$)$, Government of India, in the form of Postdoctoral Research Fellowship.
	
	The author sincerely expresses gratitude to Prof. Tuhin Ghosh (Harish-Chandra Research Institute, Prayagraj, Uttar Pradesh, India) for his valuable insights and feedback, which significantly contributed to improving this article.
	

	\vspace{.3cm}
	\textbf{Data Availability:} Data sharing is not applicable to this article as no datasets were generated or analyzed during the current study. 
	
	\bibliography{inverse.bib}
	
\section{Appendix} \label{observability wave}
	
This proof is motivated from the article \cite{biccari2025boundary}. We follow exactly the same method as in \cite{biccari2025boundary}. We only have an extra non-negative small potential with small growth. The smallness in our calculations is explicit in nature. Consider the wave equation: 
\begin{equation}\label{adjointWave}
	\begin{cases}
		p_{tt} + (-\Delta)^ap+qp = 0 & \mbox{in }\; (0,T)\times\Omega
		\\
		p=0 & \mbox{in }\; (0,T)\times(\mathbb R^N\setminus\Omega)
		\\
		p(\cdot,0)=p_0,\;\;p_t(\cdot,0)=p_1 & \mbox{in }\;\Omega.
	\end{cases}
\end{equation}
Note that, at this level, the system being time-reversible, taking the initial data at $t = 0$ or $t = T$ is irrelevant, contrarily to the parabolic setting.

The system \eqref{adjointWave}, governed by the non-local fractional operator $(-\Delta)^a$, which is symmetric maximal monotone operator and generates group of isometries. As a consequence, it admits a unique weak solution that preserves the total energy (see, for instance, [Chapter 10, Theorem 10.14] in \cite{brezis2011functional}). In other words, for every initial data $(p_0,p_1)\in H_0^a(\Omega)\times L^2(\Omega)$ there exists a unique finite energy solution 
\begin{align*}
	p\in C([0,T];H_0^a(\Omega))\cap C^1([0,T];\mathrm{L}^2(\Omega))\cap C^2([0,T];H^{-a}(\Omega))
\end{align*}
of \eqref{adjointWave}. Here $H_0^a(\Omega)$ denotes the fractional order Sobolev space consisting of all functions in $H^a(\mathbb{R}^N)$ vanishing in $\mathbb{R}^N\setminus\Omega$, while $H^{-a}(\Omega)$ is its dual. Moreover, the function $p$ admits the following spectral representation:
\begin{align*}
	p(x,t) = \sum_{j\in\mathbb{N}} p_j(t)\phi_j(x)
\end{align*}
where, for each \( j \in \mathbb{N} \), the coefficient function $p_j$ satisfies the second-order ordinary differential equation:
\begin{align*}
	\begin{cases}
		p_j^{\prime\prime}(t)+ \lambda_j p_j(t) = 0,\qquad t\in (0,T),
		\\
		p_j(0) = a_j, \;\;\; p_j'(0) = b_j.
	\end{cases}
\end{align*}
Here, $(\lambda_j,\phi_j)_{j\in\mathbb{N}}$ denote the eigenvalues and eigenfunctions of the operator $(-\Delta)^a+q$, and $(a_j,b_j)_{j\in\mathbb{N}}$ are the Fourier coefficients of the initial data $(p_0,p_1)$ with respect to the basis $(\phi_j)_{j\in\mathbb{N}}$, that is,
\begin{align*}
	p_0(x) = \sum_{j\in\mathbb{N}} a_j\phi_j(x) \quad \mbox{and} \quad p_1(x) = \sum_{j\in\mathbb{N}} b_j\phi_j(x).
\end{align*}
Thanks to the symmetricity of the operator $(-\Delta)^a+q$, the energy of solutions of \eqref{adjointWave} is conserved along time.  That is, 
\begin{align}\label{e24}
	E_a(t):=\frac 12\int_{\Omega}|p_t|^2\,dx+\frac 12\int_{\mathbb{R}^N}|(-\Delta)^{\frac a2}p|^2\,dx+ \frac 12\int_{\Omega} qp^2\, dx= E_a(0).
\end{align}
Now we establish the  multiplier identity for solutions of \eqref{adjointWave}. To this end, we focus on a special class of solutions $p\in \mathcal H_J$, with $\mathcal H_J$  as in \eqref{finite projective space}. They are finite energy solutions of \eqref{adjointWave} involving a finite number of Fourier components, of the form 
\begin{align}\label{solFourier}
	p(x,t) = \sum_{j=1}^J p_j(t)\phi_j(x).
\end{align}
The following result is the Pohozaev identity for the finite energy solution of the wave equation \eqref{adjointWave} in the projective space \eqref{finite projective space}, which helps us obtain Proposition \ref{observability estimate}. 
\begin{Prop}\label{lemmaMult}
	Let $a\in(0,1)$ and $p\in\mathcal H_J$ be the solution of \eqref{adjointWave} constituted by a finite number of Fourier components, as defined in \eqref{solFourier}. Then,
	\begin{align}\label{e210}
		\frac{\Gamma(1+a)^2}{2}\int_{\epsilon}^T \int_{\partial\Omega}(x\cdot\nu)\left|\frac{p}{d^a}\right|^2\,d\sigma dt
		&=  aTE_a(\epsilon)+ \int_{\Omega}p_t\left(x\cdot\nabla p + \frac{N-a}{2}p\right)\,dx\,\bigg|_{t=\epsilon}^{t=T}\\
		&-\int_{\epsilon}^T\!\int_{\Omega}\left(q\frac a2+x\cdot\nabla \frac{q}{2}\right) p^2 \, dxdt- aT\int_{\Omega} qp^2 \, dx\Big|_{t=\epsilon} \notag
	\end{align}
	where $\Gamma$ is the Euler-Gamma function.
\end{Prop}
\begin{proof}
	\noindent\textbf{Step 1: Application of Pohozaev identity.} We formally multiplying the first equation in \eqref{adjointWave} by $x\cdot\nabla p$. Integrating over $(\epsilon,T)\times\Omega$ we get that
	\begin{align}\label{A1}
		\int_{\epsilon}^T\int_{\Omega}p_{tt}\left(x\cdot\nabla p\right)\,dxdt +\int_{\epsilon}^T\int_{\Omega}(-\Delta)^ap\left(x\cdot\nabla p\right)\,dxdt+\int_{\epsilon}^T\int_{\Omega} q(x\cdot \nabla p)p =0.
	\end{align}
	The regularity assumptions of \cite{ros2014pohozaev}, are fulfilled by solutions of the fractional wave equation involving a finite number of Fourier components \cite{biccari2025boundary}, we have that, for all $t\in [\epsilon, T]$, 
	\begin{align}\label{A2}
		\int_{\Omega}(-\Delta)^ap \left(x\cdot\nabla p\right)\,dx = \frac{2a-N}{2}\int_{\mathbb{R}^N}|(-\Delta)^{\frac a2}p|^2\,dx - \frac{\Gamma(1+a)^2}{2}\int_{\partial\Omega}(x\cdot\nu)\left|\frac{p}{d^a}\right|^2\,d\sigma.
	\end{align}
	Using \eqref{A2} we get from \eqref{A1} that 
	\begin{align}\label{A3}
		\int_{\epsilon}^T\int_{\Omega}p_{tt}\left(x\cdot\nabla p\right)\,dxdt + \int_{\epsilon}^T\int_{\Omega} qp(x\cdot \nabla p) \, dxdt =& \frac{N-2a}{2}\int_{\epsilon}^T\int_{\mathbb{R}^N}|(-\Delta)^{\frac a2}p|^2\, dxdt\\
		+ &\frac{\Gamma(1+a)^2}{2}\int_{\epsilon}^T\int_{\partial\Omega}(x\cdot\nu)\left|\frac{p}{d^a}\right|^2\,d\sigma dt. \nonumber
	\end{align}
	The right hand side can be computed as follows:
	\begin{align}\label{A4}
		\int_{\epsilon}^T\!\int_{\Omega}p_{tt}&\left(x\cdot\nabla p\right)\,dxdt + \int_{\epsilon}^T\int_{\Omega} q(x\cdot \nabla p)p \, dx\\ = &\, \int_{\Omega}p_t\left(x\cdot\nabla p\right)\,dx\,\bigg|_{t=\epsilon}^{t=T} 
		- \int_{\epsilon}^T\!\int_{\Omega} p_t\left(x\cdot\nabla p_t\right)\,dxdt+\int_{\epsilon}^T\int_{\Omega} q(x\cdot \nabla p)p \, dx  \notag
		\\
		= & \int_{\Omega}p_t\left(x\cdot\nabla p\right)\,dx\,\bigg|_{t=\epsilon}^{t=T} 
		-\int_{\epsilon}^T\!\int_{\Omega}x\cdot\nabla\left(\frac{|p_{t}|^2}{2}\right)\,dxdt+\int_{\epsilon}^T\int_{\Omega} qx\cdot \nabla \left(\frac{p^2}{2}\right) \, dx  \notag
		\\
		= & \int_{\Omega} p_t\left(x\cdot\nabla p\right)\,dx\,\bigg|_{t=\epsilon}^{t=T} + \frac N2\int_{\epsilon}^T\!\int_{\Omega}|p_t|^2\,dxdt- \int_{\epsilon}^T\!\int_{\Omega}\left(q\frac N2+x\cdot\nabla \frac{q}{2}\right) p^2 \, dxdt \label{e22}  .
	\end{align}
	Once identities \eqref{A3} and \eqref{A4} are obtained and justified, combining them, we get \eqref{e22}. We refer to \cite{biccari2025boundary} for justification. The same method follows when we have a smooth compactly supported potential. 
	
	\noindent\textbf{Step 2: Equipartition of the energy}. We now derive the equipartition of the energy identity. Multiplying the first equation in \eqref{adjointWave} by $p$ and integrating by parts over $(\epsilon, T)\times\Omega$ (by using the integration by parts formula \eqref{integration by parts Hs}) we obtain 
	\begin{align}\label{e28}
		-\int_{\epsilon}^T\int_{\Omega}|p_t|^2\,dxdt + \int_{\epsilon}^T\int_{\mathbb{R}^N}|(-\Delta)^{\frac a2}p|^2\,dxdt + \int_{\epsilon} p_tp\,dx\,\bigg|_{t=\epsilon}^{t=T}+\int_{\epsilon}^T\int_{\Omega}q|p|^2\,dxdt=0.
	\end{align}
	Moreover, the conservation of energy yields
	\begin{align*}
		\frac N2 \int_{\epsilon}^T \int_{\Omega}|p_t|^2\,dxdt + & \frac{2a-N}{2}\int_{\epsilon}^T\int_{\mathbb{R}^N} |(-\Delta)^{\frac a2}p|^2\,dxdt
		\\
		= \frac a2\int_{\epsilon}^T\int_{\Omega}|p_t|^2\,dxdt &+ \frac a2\int_{\epsilon}^T\int_{\mathbb{R}^N} |(-\Delta)^{\frac a2}p|^2\,dxdt + \frac{N-a}{2}\int_{\epsilon}^T\int_{\Omega}\Big(|p_t|^2 - |(-\Delta)^{\frac a2}p|^2\Big)\,dxdt
		\\
		= aTE_a(\epsilon) +&  \frac{N-a}{2}\int_{\epsilon}^T\int_{\Omega}|p_t|^2 - \frac{N-a}{2}\int_{\epsilon}^T\int_{\mathbb{R}^N} |(-\Delta)^{\frac a2}p|^2\,dxdt -aT\int_{\Omega} qp^2 \, dx\Big|_{t=\epsilon}.
	\end{align*}
	Using this in the identity \eqref{A3}, we then have
	\begin{align}\label{e29}
		\frac{\Gamma(1+a)^2}{2}&\int_0^T\int_{\partial\Omega}(x\cdot\nu)\left|\frac{p}{d^a}\right|^2\,d\sigma dt =\, aTE_a(\epsilon)\\
		+& \frac{N-a}{2}\int_0^T\int_{\Omega}|p_t|^2- \frac{N-a}{2}\int_{\epsilon}^T\int_{\mathbb{R}^N}|(-\Delta)^{\frac a2}p|^2\,dxdt 
		 + \int_{\Omega} p_t\left(x\cdot\nabla p\right)\,dx\,\bigg|_{t=0}^{t=T}\notag \\
		 -& \int_{\epsilon}^T\!\int_{\Omega}\left(q\frac N2+x\cdot\nabla \frac{q}{2}\right) p^2 \, dxdt- aT\int_{\Omega} qp^2 \, dx\Big|_{t=\epsilon}. \notag
	\end{align}
	Combining \eqref{e28} and \eqref{e29} we get the identity \eqref{e210} and the proof is finished.
\end{proof}
Next we move to the proof of the observability estimate for the finite dimensional solution of the wave equation \eqref{adjointWave}. 
\begin{Lem}\label{prop31}
	Fix $a\in(\frac 12,1)$ and $J\in\mathbb{N}$. Let $\displaystyle{\theta< \frac{1}{2}\left(1+C_{HS}\left(\frac N2+R\right)\right)^{-1}}$, where $C_{HS}$ is the Hardy-Sobolev constant on the domain $\Omega$ and $R:=\max\{\Vert x\Vert, x\in\Omega\}$. Let the potential $q$ is non-negative and  $|q(x)|,|\nabla q(x)|\leq \theta$ for all $x\in\Omega$.  Set
	\begin{equation}\label{e39}
		T_0(J) := C\lambda_J^{1-a}, 
	\end{equation}
	with a constant $C=C(N,a,\Omega, \Vert q\Vert_{\mathrm{L}^{\infty}(\Omega)})>0$.
	Then, for all $T>T_0(J)$, the following boundary observability inequality holds:
	\begin{align}\label{e37}
		E_a(T^*)\le \frac{\Gamma(1+a)^2}{2a(T-T_0(J))}\int_{\epsilon}^T\int_{\partial\Omega}(x\cdot\nu)\left|\frac{p}{d^a}\right|^2\,d\sigma dt,
	\end{align}
	for all solution $p\in \mathcal H_J$ of \eqref{adjointWave} defined in \eqref{solFourier} and for all $T^*\in[0,T]$. 
\end{Lem}
\begin{proof}
	$a\in(1/2,1)$. From \eqref{e210} we get that
	\begin{align*}
		\frac{\Gamma(1+a)^2}{2}\int_{\epsilon}^T \int_{\partial\Omega}(x\cdot\nu)\left|\frac{p}{d^a}\right|^2\,d\sigma dt
		&=  aTE_a(\epsilon)+ \int_{\Omega}p_t\left(x\cdot\nabla p + \frac{N-a}{2}p\right)\,dx\,\bigg|_{t=\epsilon}^{t=T}\\
		&-\int_{\epsilon}^T\!\int_{\Omega}\left(q\frac a2+x\cdot\nabla \frac{q}{2}\right) p^2 \, dxdt- aT\int_{\Omega} qp^2 \, dx\Big|_{t=\epsilon} \notag
	\end{align*}
	All the terms above is well defined \cite{biccari2025boundary}.  Let's define:
	\begin{align*}
		\xi(t):= \int_{\Omega}p_t\left(x\cdot\nabla p + \frac{N-a}{2}p\right)\,dx,
	\end{align*} 
	This is a well defined function. We borrow the following result from \cite{biccari2025boundary}. 
	\[
	\left|\xi(t)\right|\leq \texttt{C} \lambda_J^{1-a} E_a(\epsilon),
	\]
	where $\texttt{C}$ is a positive constant depending on $N,s$ and the domain $\Omega$. Thanks to Hardy Sobolev estimate, the following estimate holds
	\[
	\int_{\Omega} p^2(T) \, dx \leq  C_{HS} \int_{\Omega}(-\Delta)^{\frac a2} p^2(T) \, dx \leq E_a(\epsilon).
	\]
	Hence,
	\begin{align*}
		\left|\int_{\epsilon}^T\!\int_{\Omega}\left(q\frac a2+x\cdot\nabla \frac{q}{2}\right) p^2 \, dxdt\right| \leq & T\theta C_{HS} \left(\frac N2+R\right) E_a(\epsilon)\\
		\left|aT\int_{\Omega} qp^2 \, dx\Big|_{t=\epsilon}\right| \leq aT \theta E_s(\epsilon).
	\end{align*}
	All the above estimate yields:
	\[
	a\left( T-T\theta-T\theta C_{HS}\left(\frac {N}{2}+R\right) -2\texttt{C}\lambda_J^{1-a}\right) E_a(\epsilon) \leq \frac{\Gamma(1+a)^2}{2}\int_{\epsilon}^T\int_{\partial\Omega}(x\cdot\nu)\left|\frac{p}{d^a}\right|^2\,d\sigma dt
	\]
	From assumption we have $\displaystyle{\theta< \frac{1}{2}\left(1+C_{HS}\left(\frac N2+R\right)\right)^{-1}}$. Choosing the positive constant $C:=4\texttt{C}$, we conclude our proof.
\end{proof}
Thanks to Lemma \ref{prop31} and Proposition \ref{lemmaMult}, we can recover the results outlined in \cite{biccari2025boundary} and thereby establish Proposition \ref{observability estimate}.

\end{document}